\theoremstyle{plain}
\newtheorem{theorem}{Theorem}[section]
\newtheorem{proposition}[theorem]{Proposition}
\newtheorem{corollary}[theorem]{Corollary}
\newtheorem{lemma}[theorem]{Lemma}
\newtheorem{definition}[theorem]{Definition}
\theoremstyle{definition}
\newtheorem{example}[theorem]{Example}
\newtheorem{remark}[theorem]{Remark}
\numberwithin{equation}{section}
\numberwithin{table}{section}
\DeclareMathOperator{\Tr}{Tr}
\def \F {{\mathbb F}}
\renewcommand*{\backref}[1]{}
\renewcommand*{\backrefalt}[4]{%
	\ifcase #1 (Not cited.)%
	\or        (Cited on page~#2.)%
	\else      (Cited on pages~#2.)%
	\fi}
\journal{\phantom{}}
\begin{document}

\begin{frontmatter}

\title{Changing almost perfect nonlinear functions \\ on affine subspaces of small codimensions}

\author[inst1]{Hiroaki Taniguchi}\ead{taniguchi.hiroaki@yamato-u.ac.jp}

\affiliation[inst1]{organization={Yamato University, Department of Education}, addressline={2-5-1, Katayamacho}, city={Suita City}, postcode={564-0082},country={Japan}}

\author[inst2]{Alexandr Polujan}\ead{alexandr.polujan@gmail.com}
\author[inst2]{Alexander Pott}\ead{alexander.pott@ovgu.de}

\affiliation[inst2]{organization={Otto von Guericke University}, addressline={Universitätsplatz 2},city={Magdeburg},postcode={39106},country={Germany}}

\author[inst3]{Razi Arshad}\ead{r.arshad@kingston.ac.uk}
\affiliation[inst3]{organization={School of Computer Science and Mathematics}, addressline={Kingston University},city={London},postcode={KT12EE},country={UK}}

\begin{abstract}
In this article, we study algebraic decompositions and secondary constructions of almost perfect nonlinear (APN) functions. In many cases, we establish precise criteria which characterize when certain modifications of a given APN function yield new ones. Furthermore, we show that some of the newly constructed functions are extended-affine inequivalent to the original ones.
\end{abstract}

\begin{keyword}
APN function \sep Equivalence \sep Secondary construction \sep Exponential sum  
\MSC[2010] 06E30 \sep 11T06 \sep 94A60
\end{keyword}
\end{frontmatter}

\section{Introduction}

Let $\F_2^n$ be the $n$-dimensional vector space over the finite field with two elements $\F_2$. A mapping $F\colon\F_2^n\to\F_2^m$ is called an $(n,m)$-function. Almost perfect nonlinear (APN) functions are $(n,m)$-functions $F$ with optimal differential properties. Formally, it means that the cardinality of the set $\{x\in\F_2^n\colon F(x+a)+F(x)=b\}$, which is always even due to the parity of solutions $x$ and $x+a$, is less than or equal to $2$, for any nonzero $a\in {\Bbb F}_2^n$ and for any $b \in {\Bbb F}_2^m$. Originally introduced in the $(n,n)$-case, APN functions are valuable primitives in cryptographic applications, especially in the design of S-Boxes~\cite{carlet}. Due to their correspondence to certain optimal codes with minimum distance five~\cite{CCZ:1998} and the possibility of constructing from them difference sets~\cite{Dillon1999,DillonDobbertin2004} and large Sidon sets~\cite{CP_2024_Sidon}, they play a significant role in coding theory and, in general, discrete mathematics and combinatorics~\cite{Gologlu23Habil}. Recently, there has been an increasing interest in studying $(n,m)$-APN functions for the case $m>n$; for the recent works on the topic, we refer to~\cite{abbondati_Dproperty,taniguchi}. 

One of the central problems in the research on APN functions lies in discovering new constructions of these mappings. 
The known construction methods of such functions can be classified into primary and secondary. Primary constructions aim to provide new infinite families of APN functions ``from scratch'' (for the summary of the known infinite families, we refer to the recent article~\cite{Li_Kaleyski_2024}), while secondary constructions aim to construct new functions from the known ones. Some of the well-known secondary construction methods of APN functions (we briefly explain them further) include the switching construction (and its various generalizations), modifications by applying functions of a special shape, and isotopic shift construction, to name a few.

For a given $(n,n)$-APN function $F$, represented as a vector of coordinate single-output functions $F(x)=(f_1(x),f_2(x),\ldots,f_n(x))$, the switching construction aims at constructing a new APN function of the form $F'(x)=(f_1(x),f_2(x),\ldots,f'_n(x))$, i.e., by modifying a single coordinate function of a given APN function (w.l.o.g, the last one).  An advantage of this approach is that it preserves much of the information from the original function $F$, allowing room to demonstrate the APN property of a new function $F'$; see, e.g.,~\cite{BUDAGHYAN2009150,Edel_non_quadratic_APN:200959}. Note that switching~\cite{Edel_non_quadratic_APN:200959} and several related methods~\cite{Yu_QAM:2014,Kalgin2023}, which can be viewed as its generalizations, can be used both for generating numerous new examples through computer searches in fixed dimensions and constructing new infinite families of APN functions~\cite{BUDAGHYAN2009150}. In the latter case, one obtains APN functions of the form $G(x)=F(x)+Tr(H(x))$ defined on the finite field $\F_{2^n}$, which gives the motivation to study further generalization in polynomial form. For example, in~\cite{BCL_ITW_2009,Villa2019}, the authors studied the APN-ness of mappings~ $\F_{2^n}\ni x \mapsto L_1(x^3) + L_2(x^9)$ where $L_1$ and $L_2$ are linear. In~\cite{CharpinK17}, the authors investigated APN-ness of mappings of the form $G(x)=F(x)+\operatorname{Tr}(x)L(x)$, which corresponds to modifications by linear functions on hyperplanes. The studies~\cite{BCCCV_Isotopic_2020,Budaghyan2021} provide an analysis of the APN property of mappings defined as $G(x) = xL_1^{2^i}(x) + x^{2^i}L_2(x)$ on $\F_{2^n}$. This approach to constructing APN functions is called the (generalized) isotopic shift method.

While the previously discussed constructions focus on modifying a single APN function, another possible approach is to combine several given APN functions to build a new one. For example, the idea of constructing an APN function in $n+1$ variables from two APN functions in $n$ variables has been studied in~\cite{Kalgin2023} and later extended in~\cite{BeierleC23,BeierleLP22}. These results, in turn, stimulate further investigations of various algebraic decompositions of APN functions. For example, the connection between different APN functions was shown useful, when certain decompositions of $(n,n)$-APN functions helped to characterize the structure of quadratic APN functions with the maximum linearity~\cite{BeierleLP22}.

These findings motivate our further investigation of variations of the known secondary constructions of APN functions, their algebraic decompositions, and, in particular, modifications of APN functions on affine subspaces with small codimensions. This approach preserves much of the structure of the original APN function enabling the verification of the APN property for a modified function and, in many cases, allowing for an explicit characterization of this property.

\paragraph{Our contribution and the structure of the paper} The paper is organized as follows. In Subsection~\ref{sec: preliminaries} we give a necessary background on APN functions. In Section~\ref{sec 2 NEW}, we consider construction methods of $(n,m)$-APN functions. In Subsection~\ref{sec: 2}, we consider the switching construction for $(n,m)$-APN functions, and in Section~\ref{sec: 3}, we investigate the question of constructing an $(n,m)$-APN function $F$ from two $(n-1,m)$-APN functions $f$ and $g$ defined on complementary hyperplanes of $\F_2^n$. Notably, we uncover structural connections between such mappings and present a necessary and sufficient condition on the functions $f$ and $g$  to ensure that $F$ is APN (Theorem~\ref{thprop1}), along with a simplified condition for the case when both $f$ and $g$ are quadratic (Proposition~\ref{prop2}). In the remaining sections, we concentrate solely on the construction methods of $(n,n)$-APN functions. In Section~\ref{sec: 4}, we investigate how to construct a new quadratic APN function $G$ from a given quadratic APN function $F$ on $\F_2^n$ by adding to it a linear function defined on a hyperplane. Identifying $\F_2^n$ with the finite field $\F_{2^n}$, this question can be interpreted as finding APN functions on $\F_{2^n}$ of the form $G(x)=F(x)+\Tr(x)L(x)$, where $F$ is a quadratic APN function and $L$ is a linear mapping on $\F_{2^n}$. In Subsection~\ref{subsec: 4.1}, we provide a complete characterization of the APN property of such modifications (Theorem~\ref{th1}) and call such APN functions $G$ and $F$ ``hyperplane-equivalent'' or $H$-equivalent, for short  (Definition~\ref{def: H-equivalence}). Remarkably, we indicate that all extended-affine equivalence classes of quadratic APN functions on $\F_{2^6}$ have a representative of the form $G(x)=x^3+\Tr(x)L(x)$, which is $H$-equivalent to $F(x)=x^3$ (Theorem~\ref{rem: apns in dim 6}). Thus, we provide a new unifying approach to describe quadratic APN functions on $\F_{2^6}$. Motivated by these observations, in Subsection~\ref{subsec: 4.2}, we further investigate the APN property for quadratic APN functions that are $H$-equivalent to $x^3$. Particularly, we provide a necessary and sufficient condition for the APN-ness of mappings $\F_{2^n}\ni x \mapsto x^3+\Tr(x)L(x)$ using certain exponential sums (Theorem~\ref{th: Kloosterman sums iff}). 
Finally, in Section~\ref{sec: 5}, we present a necessary and sufficient condition (Theorem~\ref{th:Subspaces}) for constructing new APN functions from a given one by adding constant functions defined on complementary affine subspaces of codimension two. We indicate in Example~\ref{ex:subspace_n_8_1} that the proposed construction method yields extended-affine inequivalent functions. The paper is concluded in Section~\ref{sec: conclusion}.

\subsection{Preliminaries}\label{sec: preliminaries}
Let ${\Bbb F}_{2^n}$ be the finite field of $2^n$ elements. We sometimes identify ${\Bbb F}_{2^n}$ with ${\Bbb F}_2^n$ as an ${\Bbb F}_2$-vector space. We denote the sets ${\Bbb F}_{2^n}\backslash \{0\}$ by ${\Bbb F}_{2^n}^\times$ and ${\Bbb F}_2^n\backslash \{0\}$ by $({\Bbb F}_2^n)^\times$. For the finite fields $K\supset F$ of characteristic $2$, we denote the \textit{trace function} from $K$ to $F$ by ${\Tr}^K_F$. We denote ${\Tr}^K_{{\Bbb F}_2}$ by $\Tr$ and call it the \textit{absolute trace} of $K$. The mappings $\F_{2^n}\ni x \mapsto\sum\limits_{i=0}^{n-1}a_ix^{2^i}$, where all $a_i\in\F_{2^n}$, are called \textit{linearized polynomials}; these are linear mappings defined on the finite field $\F_{2^n}$.

A function $F\colon{\Bbb F}_2^n\rightarrow {\Bbb F}_2^m$ is called an \textit{almost perfect nonlinear (APN)} $(n,m)$-function  
if the cardinality of the set $\{x\in\F_2^n\colon F(x+a)+F(x)=b\}$ is less than or equal to $2$ for any nonzero $a\in {\Bbb F}_2^n$ and for any $b \in {\Bbb F}_2^m$. Sometimes we omit the specification of $n$ and $m$, when they are clear from the context. A function $F\colon{\Bbb F}_2^n\rightarrow {\Bbb F}_2^m$ is called \textit{quadratic} if the mapping ${\Bbb F}_2^m\times{\Bbb F}_2^m\ni (x,y) \mapsto F(x+y)+F(x)+F(y)+F(0)$ is ${\Bbb F}_{2}$-bilinear. 

On the set of all $(n,m)$-functions we introduce the following equivalence relations that preserve the APN property. The functions $F_1$ and $F_2$ from ${\Bbb F}_{2}^n$ to ${\Bbb F}_{2}^m$ are called \textit{Carlet-Charpin-Zinoviev equivalent} (\textit{CCZ-equivalent}, for short) if the graphs $G_{F_1}:=\{(x,F_1(x))\colon x\in {\Bbb F}_{2}^n\}$ and $G_{F_2}:=\{(x,F_2(x))\colon x\in {\Bbb F}_{2}^n\}$ in ${\Bbb F}_{2}^n\oplus {\Bbb F}_{2}^m$ are affine equivalent~\cite{CCZ:1998}, that is, if there exists an ${\Bbb F}_{2}$-linear isomorphism $l \in GL_2({\Bbb F}_{2}^n\oplus {\Bbb F}_{2}^m)$ and an element $v\in {\Bbb F}_{2}^n\oplus {\Bbb F}_{2}^m$ such that $l(G_{F_1})+v=G_{F_2}$. We say that $(n,m)$-functions $F$ and $F'$ are \textit{extended-affine equivalent (EA-equivalent)}, if there exist two affine permutations $A_1\colon \mathbb{F}_2^m \rightarrow \mathbb{F}_2^m, A_2\colon \mathbb{F}_2^n \rightarrow \mathbb{F}_2^n$ and an affine function $A_3\colon \mathbb{F}_2^n \rightarrow \mathbb{F}_2^m$ such that $A_1 \circ F \circ A_2 + A_3=F^{\prime}$. We note that in general, CCZ-equivalence is coarser than EA-equivalence, and that EA-equivalent functions are also CCZ-equivalent. However, in the case of quadratic APN functions, they coincide, as shown in~\cite{Yoshiara2012}.

Finally, we define the following invariant that will be used later to distinguish inequivalent functions. The \textit{$\Gamma$-rank} of a function $F\colon{\Bbb F}_{2}^n\rightarrow {\Bbb F}_{2}^m$ is the rank of the incidence matrix over ${\Bbb F}_{2}$ of the incidence structure $\{{\cal P}, {\cal B}, I\}$, where ${\cal P}={\Bbb F}_{2}^n\oplus {\Bbb F}_{2}^m$, ${\cal B}={\Bbb F}_{2}^n\oplus {\Bbb F}_{2}^m$ and $(a,b)I(u,v)$ for $(a,b)\in {\cal P}$ and $(u,v)\in {\cal B}$ if and only if $F(a+u)=b+v$. It is well-known that if two functions $F_1$ and $F_2$ from ${\Bbb F}_{2}^n$ to ${\Bbb F}_{2}^m$ are CCZ-equivalent, then they have the same $\Gamma$-rank, see~\cite{Edel_non_quadratic_APN:200959}.

\section{Constructing $(n,m)$-APN functions}\label{sec 2 NEW}
In this section, we study construction methods of $(n,m)$-APN functions using two different approaches. The first one is based on the well-known switching construction introduced for the case of $(n,n)$-APN functions, and the second one is based on the concatenation of two $(n-1,m)$-APN functions defined on complementary hyperplanes in $\F_2^n$.
\subsection{Revisiting the switching construction for $(n,m)$-APN functions}\label{sec: 2}
For a function $f\colon {\Bbb F}_2^n\rightarrow {\Bbb F}_2^m$, we define the mapping $B_f(x,t):=f(x+t)+f(x)+f(t)+f(0)$, for $x,t\in \F_2^n$. Recall the following definition and lemmas in \cite{taniguchi}.
\begin{definition}[Definition~1 of \cite{taniguchi}]For a function $f\colon {\Bbb F}_2^n\rightarrow {\Bbb F}_2^m$, let us define the subsets $D_f({\Bbb F}_2^n)$ and $D_f^*({\Bbb F}_2^n)$ of  ${\Bbb F}_2^m$ as
	\begin{equation*}
		\begin{split}
			D_f({\Bbb F}_2^n):=&\{B_f(x,t)+B_f(y,t)\mid x,y,t\in {\Bbb F}_2^n\} \mbox{ and},\\
			D_f^*({\Bbb F}_2^n):=&\{B_f(x,t)+B_f(y,t)\mid x,y,t\in {\Bbb F}_2^n \text{ with } x\neq y, t\neq 0 \text{ and }x\neq y+t \}
		\end{split}
	\end{equation*}
\end{definition}
We denote by $\pi \circ f$ the composition mapping defined by $(\pi\circ f)(x):=\pi(f(x))$. 
\begin{lemma}[Lemma~2 of \cite{taniguchi}]\label{prop0}
Let $f\colon {\Bbb F}_2^n\rightarrow {\Bbb F}_2^m$ be an APN function.  Let $l<m$ be a positive integer and let $\pi\colon{\Bbb F}_2^m\rightarrow {\Bbb F}_2^{l}$ be an ${\Bbb F}_2$-linear surjection. 
Then, $\pi\circ f$ is an APN function if and only if $D_f^*({\Bbb F}_2^n)\cap \ker(\pi)=\emptyset$.
\end{lemma}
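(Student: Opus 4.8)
The plan is to reduce the statement to a double-counting argument that combines the $\F_2$-linearity of $\pi$ with the APN property of $f$, the latter used twice. Write $D_af\colon x\mapsto f(x+a)+f(x)$ for the first-order derivative of $f$ in direction $a$. Since $\pi$ is linear we have $D_a(\pi\circ f)=\pi\circ D_af$, so for every nonzero $a\in\F_2^n$ and every $c\in\F_2^l$ the solution set $\{x: D_a(\pi\circ f)(x)=c\}$ decomposes as the disjoint union of the sets $\{x: D_af(x)=b\}$, taken over $b$ in the coset $\pi^{-1}(c)$ of $\ker(\pi)$. As $f$ is APN, each such set has $0$ or $2$ elements, so
\[
\bigl|\{x: D_a(\pi\circ f)(x)=c\}\bigr| \;=\; 2\cdot\bigl|\{\,b\in\pi^{-1}(c)\;:\; b\in\operatorname{Im}(D_af)\,\}\bigr|.
\]
Hence $\pi\circ f$ fails to be APN exactly when, for some nonzero $a$, the image of $D_af$ contains two distinct elements $b_1\neq b_2$ with $\pi(b_1)=\pi(b_2)$, i.e. with $b_1+b_2\in\ker(\pi)\setminus\{0\}$.

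I would then translate this into the language of $B_f$. If $b_1=D_tf(x)$ and $b_2=D_tf(y)$ with $t:=a\neq 0$, then $B_f(x,t)+B_f(y,t)=f(x+t)+f(x)+f(y+t)+f(y)=D_tf(x)+D_tf(y)=b_1+b_2$, since the two copies of $f(t)+f(0)$ cancel. Therefore $\pi\circ f$ is not APN if and only if some element of the form $B_f(x,t)+B_f(y,t)$ with $t\neq 0$ lies in $\ker(\pi)\setminus\{0\}$.

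It remains to recognize this set of values as $D_f^*(\F_2^n)$, and this is the only spot that needs a short verification — the place where the APN hypothesis on $f$ is used the second time. For fixed $t\neq 0$, the APN property says that $D_tf$ is two-to-one with $D_tf(x)=D_tf(y)$ iff $y\in\{x,x+t\}$; so the side conditions $x\neq y$ and $x\neq y+t$ in the definition of $D_f^*(\F_2^n)$ are jointly equivalent to $B_f(x,t)+B_f(y,t)\neq 0$. Consequently $D_f^*(\F_2^n)$ is precisely the set of \emph{nonzero} values $B_f(x,t)+B_f(y,t)$ with $t\neq 0$, and — because $0\notin\ker(\pi)\setminus\{0\}$ — the requirement $D_f^*(\F_2^n)\cap\ker(\pi)=\emptyset$ is exactly the negation of the non-APN criterion from the previous paragraph. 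I expect this last bookkeeping step to be the main (indeed essentially the only) obstacle; once the equivalence of the combinatorial side conditions with the inequality $B_f(x,t)+B_f(y,t)\neq 0$ is in place, the rest is the linearity-and-counting argument of the first paragraph.
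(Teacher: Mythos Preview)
The paper does not give its own proof of this lemma; it is merely recalled from~\cite{taniguchi} and used as a tool. So there is no in-paper argument to compare against.

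Your proof is correct. The counting identity in the first paragraph is the right reduction, and the key observation you flag at the end --- that for $t\neq 0$ the APN property of $f$ makes the side conditions ``$x\neq y$ and $x\neq y+t$'' equivalent to $B_f(x,t)+B_f(y,t)\neq 0$, so that $D_f^*(\F_2^n)$ is exactly the set of nonzero values of $B_f(x,t)+B_f(y,t)$ with $t\neq 0$ --- is precisely what one needs to close the argument. One small wording quibble: ``$D_tf$ is two-to-one'' should be read as ``each value in the image of $D_tf$ has exactly two preimages,'' which is how you use it; the map is of course not surjective in general.
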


Recall that a function $f\colon{\Bbb F}_2^n\rightarrow {\Bbb F}_2^m$ is called a \textit{differentially $4$-uniform} function if $\vert\{x\in\F_2^n\colon f(x+a)+f(x)=b\}\vert\le 4$ for any nonzero $a\in {\Bbb F}_2^n$ and for any $b \in {\Bbb F}_2^m$.
\begin{lemma}[Lemma~3 of \cite{taniguchi}]\label{lemmax2}
Let $f\colon {\Bbb F}_2^n\rightarrow {\Bbb F}_2^m$ be an APN function.
Let $\pi\colon {\Bbb F}_2^m\rightarrow {\Bbb F}_2^{m-1}$ be an ${\Bbb F}_2$-linear surjection such that $D_f^*({\Bbb F}_2^n)\cap \ker(\pi)\neq \emptyset$. 
Then, $\pi\circ f\colon {\Bbb F}_2^n\rightarrow {\Bbb F}_2^{m-1}$ is a differentially $4$-uniform function.
\end{lemma}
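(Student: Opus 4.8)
The plan is to exploit the $\F_2$-linearity of $\pi$ to rewrite one difference equation for $\pi\circ f$ as a disjunction of two difference equations for $f$, and then apply the APN property of $f$ term by term. Write $F:=\pi\circ f$, fix a nonzero $a\in\F_2^n$ and an arbitrary $b\in\F_2^{m-1}$. Since $\pi$ is $\F_2$-linear, $F(x+a)+F(x)=\pi\!\left(f(x+a)+f(x)\right)$, so
\[
\{x\in\F_2^n\colon F(x+a)+F(x)=b\}=\{x\in\F_2^n\colon f(x+a)+f(x)\in\pi^{-1}(b)\}.
\]

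Next I would pin down the fibre $\pi^{-1}(b)$. Because $\pi\colon\F_2^m\to\F_2^{m-1}$ is a surjection, $\dim_{\F_2}\ker(\pi)=1$, so $\ker(\pi)=\{0,e\}$ for a unique nonzero $e\in\F_2^m$, and $\pi^{-1}(b)$ is a coset of $\ker(\pi)$ of size exactly two, say $\pi^{-1}(b)=\{c,c+e\}$. Hence the solution set above decomposes as
\[
\{x\colon f(x+a)+f(x)=c\}\ \cup\ \{x\colon f(x+a)+f(x)=c+e\}.
\]
Each of the two sets in this union is the solution set of a first-order difference equation of the APN function $f$ with $a\neq 0$, hence has at most two elements; therefore the union has at most four elements. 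Since $a\neq 0$ and $b$ were arbitrary, $F=\pi\circ f$ is differentially $4$-uniform.

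There is essentially no obstacle here: the argument is a direct count, and the only point worth isolating is that removing exactly one output coordinate forces every fibre of $\pi$ to have size $2$, which is precisely what turns the APN bound $2$ into the bound $4$. I would also note that the hypothesis $D_f^*(\F_2^n)\cap\ker(\pi)\neq\emptyset$ is not needed for the inequality $\le 4$ itself; rather, it guarantees sharpness. Indeed, if $\delta\in D_f^*(\F_2^n)\cap\ker(\pi)$, then, writing $\delta=f(x+t)+f(x)+f(y+t)+f(y)$ with $t\neq 0$ and $x\notin\{y,y+t\}$ (this is the definition of $D_f^*$, with the terms $f(t)+f(0)$ cancelling in characteristic $2$), the element $\delta$ must be the nonzero element $e$ of $\ker(\pi)$, and one checks that $x,\,x+t,\,y,\,y+t$ are four distinct solutions of $F(z+t)+F(z)=\pi\!\left(f(x+t)+f(x)\right)$. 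Thus $F$ is not differentially $2$-uniform, which by Lemma~\ref{prop0} is exactly the regime where $\pi\circ f$ fails to be APN; this is why $4$ is the pertinent uniformity bound in this situation.
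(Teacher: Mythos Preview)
Your proof is correct. The paper does not supply its own proof of this lemma; it is quoted from~\cite{taniguchi} without argument, so there is nothing to compare against beyond checking correctness. The decomposition of the solution set via the two-element fibre $\pi^{-1}(b)=\{c,c+e\}$ and the termwise application of the APN bound is exactly the natural argument, and your remark that the hypothesis $D_f^*(\F_2^n)\cap\ker(\pi)\neq\emptyset$ is not used for the inequality $\le 4$ but only for sharpness (equivalently, by Lemma~\ref{prop0}, to ensure $\pi\circ f$ is not APN) is accurate under the paper's definition of differential $4$-uniformity as the upper bound $\le 4$.
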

The following proposition is a slight generalization of the switching construction (Theorem~3 of \cite{Edel_non_quadratic_APN:200959}).
\begin{proposition}\label{prop3}
Let $f\colon {\Bbb F}_2^n\rightarrow {\Bbb F}_2^m$ be a function and $g\colon {\Bbb F}_2^n\rightarrow {\Bbb F}_2$ a Boolean function such that $F\colon {\Bbb F}_2^n\ni x \mapsto (f(x),g(x))\in {\Bbb F}_2^m\oplus {\Bbb F}_2$ is an APN $(n,m+1)$-function.
Let $u$ be a nonzero element of ${\Bbb F}_2^m$.
Then, $f(x)+ug(x)$ is an APN $(n,m)$-function if and only if 
$f(x+t)+f(x)+f(y+t)+f(y)=u$ implies $g(x+t)+g(x)+g(y+t)+g(y)=0$.\end{proposition}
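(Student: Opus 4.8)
The plan is to translate the APN property of the $(n,m)$-function $F'(x) := f(x)+ug(x)$ into a statement about the second-order derivatives $B_{F'}(x,t)+B_{F'}(y,t)$ and compare it with the corresponding statement for the $(n,m+1)$-function $F(x)=(f(x),g(x))$, which is APN by hypothesis. Recall that an $(n,\ell)$-function $h$ is APN if and only if for all $x,y,t\in\F_2^n$ with $t\neq 0$, $x\neq y$, $x\neq y+t$, one has $B_h(x,t)+B_h(y,t)\neq 0$; equivalently, using the bilinear-like expansion, $h(x+t)+h(x)+h(y+t)+h(y)\neq 0$ under these constraints. (One should note that $h(x+t)+h(x)+h(y+t)+h(y)=0$ is automatic when $t=0$, when $x=y$, or when $x=y+t$, so the APN condition is exactly that these are the \emph{only} solutions.)

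First I would write the negation of ``$F'$ is APN'': there exist $x,y,t$ with $t\neq 0$, $x\neq y$, $x\neq y+t$ such that
\[
f(x+t)+f(x)+f(y+t)+f(y) + u\bigl(g(x+t)+g(x)+g(y+t)+g(y)\bigr)=0.
\]
Since $g$ is Boolean, the quantity $\delta_g := g(x+t)+g(x)+g(y+t)+g(y)$ lies in $\F_2$, and $u\neq 0$ in $\F_2^m$, so the displayed equation splits into two exclusive cases: either $\delta_g=0$ and $f(x+t)+f(x)+f(y+t)+f(y)=0$, or $\delta_g=1$ and $f(x+t)+f(x)+f(y+t)+f(y)=u$. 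The first case is impossible for such $x,y,t$: if $\delta_g=0$ then $(f+ug)(x+t)+\cdots = f(x+t)+f(x)+f(y+t)+f(y)$, but then the full vector $F(x+t)+F(x)+F(y+t)+F(y) = (f(\cdots)\text{-sum},\,0)$ would be zero, contradicting that $F$ is APN. Hence a violation of the APN property of $F'$ can only come from the second case: there exist $x,y,t$ (with the three genericity constraints) such that $f(x+t)+f(x)+f(y+t)+f(y)=u$ \emph{and} $g(x+t)+g(x)+g(y+t)+g(y)=1$.

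Conversely, any such $x,y,t$ plainly produces $(f+ug)(x+t)+(f+ug)(x)+(f+ug)(y+t)+(f+ug)(y)=u+u\cdot 1=0$, i.e. a genuine violation of APN-ness for $F'$. Therefore $F'$ is APN if and only if there is \emph{no} such quadruple; that is, whenever $x,y,t$ satisfy the genericity constraints and $f(x+t)+f(x)+f(y+t)+f(y)=u$, we must have $g(x+t)+g(x)+g(y+t)+g(y)=0$. Finally I would remark that the genericity constraints $t\neq 0$, $x\neq y$, $x\neq y+t$ may be dropped from the statement without changing it: if any of them fails, then $f(x+t)+f(x)+f(y+t)+f(y)=0\neq u$, so the hypothesis of the implication is vacuous, and likewise $g(x+t)+g(x)+g(y+t)+g(y)=0$ holds trivially. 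This yields the clean formulation in the proposition.

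I do not anticipate a serious obstacle here; the only point requiring care is the case split over the $\F_2$-valued quantity $\delta_g$ together with the observation that the ``$\delta_g=0$'' branch is ruled out precisely by the APN-ness of $F$ — that is the step where the hypothesis on $F$ is used, and it must be invoked for both directions (for the forward direction as above, and implicitly for the backward direction, where one checks the produced quadruple really is a violation). Everything else is a direct unwinding of definitions.
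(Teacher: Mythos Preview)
Your argument is correct and follows essentially the same logic as the paper's proof: both hinge on observing that a failure of APN-ness for $f+ug$ corresponds to a second-order derivative of $F=(f,g)$ hitting the nonzero kernel element $(u,1)$, with the $(0,0)$ branch excluded by the APN-ness of $F$. The only cosmetic difference is that the paper packages this via the projection $\pi(x,y)=x+uy$ and invokes its Lemma~\ref{prop0} (so that the condition becomes $(u,1)\notin D_F^*(\F_2^n)$), whereas you inline that lemma and argue directly from the definition; your extra remark about dropping the genericity constraints is a nice touch that the paper leaves implicit.
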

\begin{proof}  
Let $\pi\colon {\Bbb F}_2^m\oplus {\Bbb F}_2\ni (x,y)\mapsto x+uy\in {\Bbb F}_2^m$ be an ${\Bbb F}_2$-linear surjection with $\ker(\pi)=\langle (u,1)\rangle$.
Then, $(\pi\circ F)(x)=f(x)+ug(x)$.
By Lemma~\ref{prop0}, $\pi\circ F$ is an APN function if and only if $(u,1)\notin D_F^*({\Bbb F}_2^n)$, which is equivalent to the condition ``$B_f(x,t)+B_f(y,t)=f(x+t)+f(x)+f(y+t)+f(y)=u$ implies $B_g(x,t)+B_g(y,t)=g(x+t)+g(x)+g(y+t)+g(y)=0$", because $B_F(x,t)+B_F(y,t)=(B_f(x,t)+B_f(y,t), B_g(x,t)+B_g(y,t))$.
\end{proof}

\begin{remark}
If $f$ in Proposition~\ref{prop3} is not an APN function, then $f$ is a differentially $4$-uniform function by Lemma~\ref{lemmax2}.
Let $f$ be a differentially $4$-uniform function. Thus, $F\colon {\Bbb F}_2^n\ni x \mapsto (f(x),g(x))\in {\Bbb F}_2^m\oplus {\Bbb F}_2$ is an APN $(n,m+1)$-function if and only if the following conditions are satisfied:
for $x\neq y, x\neq y+t$ and $t\neq 0$, $f(x+t)+f(x)+f(y+t)+f(y)=0$ implies $g(x+t)+g(x)+g(y+t)+g(y)=1$.
\end{remark}

\begin{lemma}[See \cite{nyberg}]\label{lemma2}
Let $n=2k$ be an even integer with $k>1$.
Then, for any $a \in {\Bbb F}_{2^n}^\times$ and $b \in {\Bbb F}_{2^n}$, the following statements hold.
\begin{enumerate}
\item $x^{-1}+(x+a)^{-1}=b$ has no root in ${\Bbb F}_{2^n}$ if and only if $\Tr(\frac{1}{ab})=1$.
\item $x^{-1}+(x+a)^{-1}=b$ has $2$ root in ${\Bbb F}_{2^n}$ if and only if $ab\neq 1$ and $\Tr(\frac{1}{ab})=0$.
\item $x^{-1}+(x+a)^{-1}=b$ has $4$ root in ${\Bbb F}_{2^n}$ if and only if $b=a^{-1}$.
Furthermore, when $b=a^{-1}$ the $4$ root of the above equation in ${\Bbb F}_{2^n}$ are $\{0, a, \omega a, \omega^2 a\}$, where $\omega \in {\Bbb F}_{2^2}\backslash {\Bbb F}_{2}$.
\end{enumerate} 
\end{lemma}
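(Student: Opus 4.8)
The plan is to separate the count of solutions into the two ``singular'' arguments $x\in\{0,a\}$ and the ``generic'' arguments $x\notin\{0,a\}$, and on the generic part to reduce the equation to an additive quadratic of Artin--Schreier type. Throughout, $x^{-1}$ is read as $x^{2^{n}-2}$, so that $0^{-1}=0$; note that because $k>1$ the exponent $2^{n}-2$ is not a power of $2$, hence $x\mapsto x^{-1}$ is the genuine inversion on $\F_{2^{n}}^{\times}$ extended by $0\mapsto 0$, and not the linear Frobenius $x\mapsto x^{2}$ that occurs for $k=1$. First I would handle the singular arguments: substituting directly gives $0^{-1}+(0+a)^{-1}=a^{-1}$ and $a^{-1}+(a+a)^{-1}=a^{-1}$, so both $x=0$ and $x=a$ solve $x^{-1}+(x+a)^{-1}=b$ precisely when $b=a^{-1}$, in which case they contribute exactly two roots.

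Next, for $x\notin\{0,a\}$ both $x$ and $x+a$ are invertible and $x^{-1}+(x+a)^{-1}=\tfrac{a}{x(x+a)}$. If $b=0$ this never vanishes (as $a\neq 0$), so there is no generic root and the total count is $0$; assume from now on that $b\neq 0$. Then the equation is equivalent to $x^{2}+ax=a/b$, and the substitution $x=ay$ (legitimate since $a\neq 0$) rewrites it as $y^{2}+y=\tfrac{1}{ab}$. Using the standard fact that over $\F_{2^{n}}$ the equation $y^{2}+y=c$ is solvable if and only if $\Tr(c)=0$, in which case it has exactly the two solutions $y_{0}$ and $y_{0}+1$, and observing that here $c=\tfrac{1}{ab}\neq 0$ forces $y\notin\{0,1\}$ (hence $x=ay\notin\{0,a\}$, so every such solution is admissible), the number of generic roots is $2$ when $\Tr(\tfrac{1}{ab})=0$ and $0$ when $\Tr(\tfrac{1}{ab})=1$.

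It remains to combine the two contributions. If $b\notin\{0,a^{-1}\}$ there are no singular roots, so the total is $0$ or $2$ according as $\Tr(\tfrac{1}{ab})$ equals $1$ or $0$; this gives items~(1) and~(2). If $b=a^{-1}$, then $\tfrac{1}{ab}=1$, and here the hypothesis that $n=2k$ is \emph{even} is decisive: it yields $\Tr(1)=0$ and $\F_{4}\subseteq\F_{2^{n}}$, so $y^{2}+y+1=0$ has its two roots $\omega,\omega^{2}$ in $\F_{2^{n}}$, contributing the generic roots $x=\omega a$ and $x=\omega^{2}a$; together with the singular roots $0$ and $a$ this produces exactly the four roots $\{0,a,\omega a,\omega^{2}a\}$, which is item~(3). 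Since $b=a^{-1}$ gives $\Tr(\tfrac{1}{ab})=\Tr(1)=0$ yet four roots rather than two, the proviso ``$ab\neq 1$'' in item~(2) is exactly what is needed for consistency.

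I do not expect a genuine obstacle: the argument is essentially bookkeeping. The one point that needs real care -- and the actual content of the lemma -- is the parity of $n$: it is precisely the evenness of $n=2k$ that makes $\Tr(1)=0$ and $\F_{4}\subseteq\F_{2^{n}}$, which simultaneously produces the extra pair of roots in case~(3) and prevents case~(3) from colliding with cases~(1)--(2); the role of $k>1$ is merely to keep $x\mapsto x^{-1}$ from degenerating into the linear map $x\mapsto x^{2}$.
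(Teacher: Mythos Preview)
The paper does not give its own proof of this lemma; it is stated with a bare citation to Nyberg. Your argument is the standard one and is correct: separate the singular inputs $x\in\{0,a\}$, reduce the generic part to the Artin--Schreier equation $y^{2}+y=1/(ab)$ via $x=ay$, and then use the parity of $n$ to get $\Tr(1)=0$ and $\F_{4}\subseteq\F_{2^{n}}$ for the four-root case.

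One small point worth making explicit: for $b=0$ you correctly show there are no roots, but the lemma's item~(1) phrases the no-root condition as $\Tr(1/(ab))=1$, which is ill-defined at $b=0$ (and under the $0^{-1}=0$ convention would give $\Tr(0)=0$, falsely placing $b=0$ in case~(2)). So the trichotomy as written tacitly assumes $b\neq 0$; this is harmless for the paper's use of the lemma, but you might note it rather than leave the case dangling.
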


Now, we give an example of a differentially $4$-uniform function $f$ on ${\Bbb F}_{2^n}$ and a Boolean function $g\colon {\Bbb F}_{2^n}\rightarrow {\Bbb F}_{2}$ such that $F(x)=(f(x),g(x))$ is an $(n,n+1)$-APN function. 
\begin{example}
Let $n=2k$ be an even integer with $k>1$. 
Then, the inverse function $f(x)=x^{2^n-2}$ is a differentially $4$-uniform function on ${\Bbb F}_{2^n}$.  
By Lemma~\ref{lemma2}, we see that $f(x+a)+f(x)+f(y+a)+f(y)=0$ with $x\neq y$, $x\neq y+a$ and $a\neq 0$ implies $\{x, x+a,y,y+a\}=\{0,a,\omega a, \omega^2 a\}$.
Hence, for a Boolean function $g\colon {\Bbb F}_{2^n}\rightarrow {\Bbb F}_{2}$, we see that $F\colon {\Bbb F}_{2^n}\ni x \rightarrow (f(x),g(x))\in {\Bbb F}_{2^n}\oplus {\Bbb F}_2$ is an $(n,n+1)$-APN function if and only if 
 $g(0)+g(a)+g(\omega a)+g(\omega^2 a)=1$ for any $a\in {\Bbb F}_{2^n}^\times$.

Let $A:=\{x^3\mid x\in {\Bbb F}_{2^n}^\times\}$.
Then, ${\Bbb F}_{2^n}^\times=A\cup \omega A\cup \omega^2 A$, where $\omega A=\{\omega a\mid a\in A\}$ and $\omega^2 A=\{\omega^2 a\mid a\in A\}$.
Let us define $g$ by $\{g(a), g(\omega a), g(\omega^2 a)\}=\{0,0,1\}$ or $\{1,1,1\}$ for $a \in A$, and $g(0)=0$. 
Then, we see that $g(0)+g(a)+g(\omega a)+g(\omega^2 a)=1$ for any $a\in {\Bbb F}_{2^n}^\times$ by definition. Hence, $F\colon {\Bbb F}_{2^n}\ni x \rightarrow (f(x),g(x))\in {\Bbb F}_{2^n}\oplus {\Bbb F}_2$ is an $(n,n+1)$-APN function.
\end{example}

In the following proposition, we show that every APN function on ${\Bbb F}_{2^n}$ can be obtained from a differentially $4$-uniform function using the switching construction in Proposition~\ref{prop3}. While most of the secondary constructions of new APN functions are based on constructing them from the known APN functions, this result provides a strategy for finding new APN functions from suitably chosen differentially 4-uniform functions. The proof of this statement is not difficult to see, nevertheless, we give it for the reader's convenience.
\begin{proposition}\label{prop: switching}
Let $f$ be an APN function on ${\Bbb F}_{2^n}$.
Then, there exist a differentially $4$-uniform function $f_1$ on ${\Bbb F}_{2^n}$, a Boolean function $g\colon {\Bbb F}_{2^n}\rightarrow {\Bbb F}_{2}$ and an element $u \in {\Bbb F}_{2^n}^\times$, such that $f(x)=f_1(x)+ug(x)$ holds for all $x\in {\Bbb F}_{2^n}$.   
\end{proposition}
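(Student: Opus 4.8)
The plan is to obtain $f_1$ from $f$ by changing the value of $f$ at a single well-chosen point, to take $g$ to be the indicator function of that point and $u$ the resulting shift, and to verify (directly, and also via Proposition~\ref{prop3}) that the modified function $f_1$ is then differentially $4$-uniform. I note first that the statement is nonvacuous only if one additionally requires $f_1$ to be \emph{not} APN (otherwise one may simply take $g=0$); the construction below produces such an $f_1$ under the mild hypothesis $n\ge 2$, which I assume from now on --- for $n=1$ every function on $\F_2$ is APN, so no non-APN differentially $4$-uniform $f_1$ can exist at all, and the literal statement then holds only via the trivial choice $g=0$.

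Concretely, I would fix any $a\in\F_{2^n}$ and any $c\in\F_{2^n}^\times$, and choose $x_1\in\F_{2^n}\setminus\{a,a+c\}$, which is possible since $2^n\ge 4$; note that then $x_1+c\notin\{a,a+c\}$ as well. Let $g\colon\F_{2^n}\to\F_2$ be the indicator function of $\{a\}$ (so $g(a)=1$ and $g(x)=0$ for $x\ne a$), set
\[
u:=f(x_1+c)+f(x_1)+f(a+c)+f(a),
\]
and put $f_1:=f+ug$, so that $f_1(a)=f(a)+u$ and $f_1(x)=f(x)$ for all $x\ne a$. Since the characteristic is $2$, the identity $f=f_1+ug$ holds automatically, so the task reduces to showing that $u\in\F_{2^n}^\times$ and that $f_1$ is differentially $4$-uniform (in fact, not APN).

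For $u\ne 0$: writing $D_cf(x):=f(x+c)+f(x)$, we have $u=D_cf(x_1)+D_cf(a)$, and $u=0$ would give $D_cf(x_1)=D_cf(a)$, which by the APN property of $f$ forces $x_1\in\{a,a+c\}$, a contradiction. For the differential uniformity of $f_1$: in any nonzero direction $c'$ one has $g(x+c')+g(x)=1$ exactly for $x\in\{a,a+c'\}$, so $f_1(x+c')+f_1(x)$ agrees with $D_{c'}f(x)$ outside those two points and equals the single value $D_{c'}f(a)+u$ at them; hence every $b\in\F_{2^n}$ is attained by $x\mapsto f_1(x+c')+f_1(x)$ at most twice among $x\notin\{a,a+c'\}$ (because $f$ is APN) and at most twice among $x\in\{a,a+c'\}$, i.e. at most four times in total. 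Finally, $f_1$ is \emph{not} APN: in direction $c$ the value $D_cf(x_1)=D_cf(a)+u$ is attained by $x\mapsto f_1(x+c)+f_1(x)$ at the four distinct points $a,\,a+c,\,x_1,\,x_1+c$ (at $a,a+c$ because of the shift by $u$, at $x_1,x_1+c$ because $f_1=f$ off $\{a,a+c\}$; distinctness follows from $\{x_1,x_1+c\}\cap\{a,a+c\}=\emptyset$).

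The same conclusion can also be packaged through the paper's own machinery, which matches the ``switching'' framing: the map $(y,z)\mapsto(y+uz,z)$ is a linear permutation of $\F_{2^n}\oplus\F_2$ sending $(f,g)$ to $F:=(f_1,g)$, and $(f,g)$ is APN since its first coordinate $f$ is; hence $F$ is an APN $(n,n+1)$-function whose first coordinate $f_1$ is not APN, so $f_1$ is differentially $4$-uniform by the Remark following Proposition~\ref{prop3}. In any case there is no essential difficulty here; the only steps that need care are the verification $u\ne 0$ --- exactly where the APN property of $f$ is used --- and checking that the assumption $n\ge 2$ really does leave room to pick $x_1$ so that the four points above are genuinely distinct.
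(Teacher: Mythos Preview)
Your proof is correct. Both you and the paper establish the result by writing $f_1=f+ug$ for a suitable Boolean $g$ and nonzero $u$, then checking that $f_1$ is differentially $4$-uniform (and not APN). The difference is one of packaging: the paper starts from an arbitrary Boolean $g$ with $g(x)+g(x+t)+g(y)+g(y+t)=1$ for some $x,y,t$, forms $F=(f,g)$, invokes Dillon's observation $D_f^*(\F_2^n)=\F_{2^n}^\times$ to find $(u,1)\in D_F^*(\F_{2^n})$, and then applies Lemma~\ref{lemmax2} to the projection $\pi(x,y)=x+uy$; you instead make the choices completely explicit ($g=1_{\{a\}}$, $u=D_cf(x_1)+D_cf(a)$ for a concrete $x_1$) and verify everything by hand. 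Your approach is more self-contained and constructive, while the paper's shows in passing that the conclusion holds for a whole class of Boolean functions $g$ rather than just the indicator of a point. Your remark that the statement is trivially satisfied by $g=0$ unless one insists that $f_1$ be non-APN, together with the accompanying observation about $n=1$, is also a fair point that the paper leaves implicit.
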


\begin{proof}
Let $f$ be any APN function on ${\Bbb F}_{2^n}$ and $g\colon {\Bbb F}_{2^n}\rightarrow {\Bbb F}_{2}$ be a Boolean function.
Then, $F(x):=(f(x),g(x))$ is an APN $(n,n+1)$-function.
Moreover, we assume that $g$ satisfies, for some $x,y,t\in {\Bbb F}_{2^n}$, $B_{g}(x,t)+B_{g}(y,t)=g(x)+g(x+t)+g(y)+g(y+t)=1$. (Obviously, we have that $x\neq y, t\neq 0 \text{ and }x\neq y+t$ for this $x,y,t$.) 
We note that $D_{f}^*({\Bbb F}_2^n)={\Bbb F}_{2^n}^\times$ since $f$ is an $(n,n)$-APN function. (It is the ``Dillon's observation''; see p.~381 of \cite{carlet}).
Since  $D_{F}^*({\Bbb F}_{2^n})=\{(B_{f}(x,t)+B_{f}(y,t),B_{g}(x,t)+B_{g}(y,t)) \mid x,y,t\in {\Bbb F}_2^n \text{ with } x\neq y, t\neq 0 \text{ and }x\neq y+t \}$, there exists $(u,1)\in D_{F}^*({\Bbb F}_{2^n})$ for some $u\in {\Bbb F}_{2^n}^\times$. 
Let $\pi\colon {\Bbb F}_{2^n}\oplus {\Bbb F}_{2}\ni (x,y)\mapsto x+uy\in {\Bbb F}_{2^n}$ be a surjective linear mapping with the kernel $\langle (u,1)\rangle$. Then, $f_1(x):=\pi(F(x))=f(x)+ug(x)$ is a differentially $4$-uniform function on ${\Bbb F}_{2^n}$ by Lemma~\ref{lemmax2}. Thus, the APN function $f$ can be expressed as $f(x)=f_1(x)+ug(x)$ using a differentially $4$-uniform function $f_1$, a Boolean function $g$ and an element $u \in {\Bbb F}_{2^n}^\times$.
   \end{proof}

\begin{remark}
	The discussion in~\cite[p. 407]{carlet} explains that for an APN function $f_1$, a Boolean function $g$ and an element $u\in \F_{2^n}^\times$, the function $f:=f_1+ug$ is differentially $4$-uniform in general, and APN in rare cases. Note that this reasoning can also be extended to the case where $f_1$ is differentially $\delta$-uniform. Proposition~\ref{prop: switching}, on the other hand, states that for any APN function $f$, there exists a differentially $4$-uniform function $f_1$, a Boolean function $g$ and an element $u\in \F_{2^n}^\times$, such that $f$ can be expressed as $f:=f_1+ug$.
    This latter expression, although trivial, expresses our point of view: we want to \textit{firstly} construct a differentially $4$ uniform function and then \textit{secondly} modify this to transform it into an APN function.
\end{remark}

\subsection{Constructing $(n,m)$-APN functions from $(n-1,m)$-APN functions}\label{sec: 3}

Let $n\ge 3$ and $m\ge n$. Let $F$ be an APN function from ${\Bbb F}_2^{n}$ to ${\Bbb F}_2^m$.
We identify ${\Bbb F}_2^{n-1}$ as an $(n-1)$-dimensional subspace (a hyperplane) of ${\Bbb F}_2^n$. Let $e_0\in {\Bbb F}_2^n$ with $e_0\not\in {\Bbb F}_2^{n-1}$.
Then, the complement of ${\Bbb F}_2^{n-1}$ can be expressed as ${\Bbb F}_2^{n-1}+e_0:=\{x+e_0\colon x\in {\Bbb F}_2^{n-1}\}$. Moreover, we have ${\Bbb F}_2^n={\Bbb F}_2^{n-1}\cup ({\Bbb F}_2^{n-1}+e_0)$.  Let us define $f(x):=F(x)$ for $x\in {\Bbb F}_2^{n-1}$, and $g(x):=F(x+e_0)$ for $x\in {\Bbb F}_2^{n-1}$.
Then, the functions $f,g$ from ${\Bbb F}_2^{n-1}$ to ${\Bbb F}_2^m$ are APN. In this section, we investigate under which conditions one can ``concatenate'' two APN functions $f,g$ from ${\Bbb F}_2^{n-1}$ to ${\Bbb F}_2^m$ in order to get an APN mapping $F$ from ${\Bbb F}_2^n={\Bbb F}_2^{n-1}\cup ({\Bbb F}_2^{n-1}+e_0)$ to ${\Bbb F}_2^m$ defined by $F(x)=f(x)$ and $F(x+e_0)=g(x)$, for $x\in {\Bbb F}_2^{n-1}$. 
In the following theorem, we provide a necessary and sufficient condition for $F\colon {\Bbb F}_2^n \to {\Bbb F}_2^m$ to be APN under this construction.

\begin{theorem}\label{thprop1}
Let $f,g$ be functions from ${\Bbb F}_2^{n-1}$ to ${\Bbb F}_2^m$.
Let $e_0\in {\Bbb F}_2^{n}\backslash {\Bbb F}_2^{n-1}$.
Let us define $F$ from ${\Bbb F}_2^{n}$ to ${\Bbb F}_2^m$ by $F(x):=f(x)$ and $F(x+e_0):=g(x)$ for $x\in {\Bbb F}_2^{n}$.
Then, $F$ is an APN function if and only if the following conditions hold:
\begin{itemize}
\item[$(1)$] $f$ and $g$ are APN functions from ${\Bbb F}_2^{n-1}$ to ${\Bbb F}_2^m$,
\item[$(2)$] $f(x+a)+f(x)+ g(y+a)+g(y)\ne 0$ holds for any $x,y\in {\Bbb F}_2^{n-1}$ and for any nonzero $a\in {\Bbb F}_2^{n-1}$. 
\end{itemize}
\end{theorem}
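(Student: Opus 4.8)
The plan is to establish the equivalence by directly counting, for every nonzero direction $A\in\F_2^n$ and every $b\in\F_2^m$, the number of solutions $X\in\F_2^n$ of $F(X+A)+F(X)=b$, and showing that the bound $\leq 2$ for all such $A,b$ is equivalent to $(1)$ and $(2)$ together. I expect the main point — and the place where the argument is least transparent — to be that the single condition $(2)$ simultaneously encodes two superficially different requirements coming from the two kinds of directions: for $A\in\F_2^{n-1}$ it expresses the disjointness of the value sets of the derivatives $f(\cdot+a)+f(\cdot)$ and $g(\cdot+a)+g(\cdot)$, whereas for $A$ lying in the coset $\F_2^{n-1}+e_0$ it expresses the injectivity of the maps $x\mapsto f(x)+g(x+a)$.

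First I would set up the bookkeeping. Every $X\in\F_2^n$ is uniquely of the form $x$ or $x+e_0$ with $x\in\F_2^{n-1}$, and every nonzero $A$ is either $A=a$ with $a\in(\F_2^{n-1})^\times$, or $A=a+e_0$ with $a\in\F_2^{n-1}$ (the latter automatically nonzero since $e_0\notin\F_2^{n-1}$). For $A=a$, using $F(x)=f(x)$ and $F(x+e_0)=g(x)$, the equation $F(X+A)+F(X)=b$ reads $f(x+a)+f(x)=b$ on the part $X=x$ and $g(x+a)+g(x)=b$ on the part $X=x+e_0$, so the number of solutions is $N_f(a,b)+N_g(a,b)$, where $N_f(a,b)=|\{x\in\F_2^{n-1}:f(x+a)+f(x)=b\}|$ and likewise for $N_g$. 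For $A=a+e_0$ it reads $f(x)+g(x+a)=b$ on the part $X=x$ and $f(x+a)+g(x)=b$ on the part $X=x+e_0$; substituting $x\mapsto x+a$ in the second part identifies its solution set, as a subset of $\F_2^{n-1}$, with $\{x:f(x)+g(x+a)=b\}$, so the total number of solutions equals $2\,|\{x\in\F_2^{n-1}:f(x)+g(x+a)=b\}|$.

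For the implication ``$F$ APN $\Rightarrow(1),(2)$'', condition $(1)$ is immediate, since $f$ and $g$ arise as $F$ restricted to the subspace $\F_2^{n-1}$ and to the coset $\F_2^{n-1}+e_0$, and such restrictions of an APN function are APN. For $(2)$, if there were a nonzero $a$ and $x,y\in\F_2^{n-1}$ with $f(x+a)+f(x)=g(y+a)+g(y)=:b$, then $x$ and $x+a$ would be two solutions of $F(X+a)+F(X)=b$ inside $\F_2^{n-1}$, and $y+e_0$, $y+a+e_0$ two more inside the coset, giving four solutions and contradicting the APN property of $F$.

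For the converse I would assume $(1)$ and $(2)$ and verify the bound $\leq 2$ in both cases. If $A=a\in(\F_2^{n-1})^\times$, then $N_f(a,b),N_g(a,b)\leq 2$ by $(1)$, and reading $(2)$ as $f(x+a)+f(x)\neq g(y+a)+g(y)$ for all $x,y$ shows the value sets of the two derivatives are disjoint, so for each $b$ at most one of $N_f,N_g$ is nonzero and the sum is $\leq 2$. If $A=a+e_0$, then by the count above it suffices to show $x\mapsto f(x)+g(x+a)$ is injective for each $a\in\F_2^{n-1}$; this is where I would invoke $(2)$ again via a change of variables: from $f(x)+g(x+a)=f(x')+g(x'+a)$ with $x\neq x'$, set $a'=x+x'\neq0$ and $y=x+a$, which yields $f(x+a')+f(x)=g(y+a')+g(y)$, contradicting $(2)$. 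Since the two cases exhaust all nonzero directions, $F$ is APN, completing the proof.
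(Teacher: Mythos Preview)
Your proof is correct and follows essentially the same line as the paper's: split by whether the direction lies in $\F_2^{n-1}$ or in its coset, and in the coset case reduce to condition~$(2)$ by the change of variables $a'=x+x'$. The organization differs slightly: the paper works with the pair formulation $F(X+A)+F(X)=F(Y+A)+F(Y)$ and splits into four cases according to where both $A$ and $Y$ lie, invoking a reformulation of~$(2)$ as ``$f(x)+f(y)+g(u)+g(v)\neq 0$ whenever $x+y+u+v=0$ with $x\neq y$''. Your counting formulation is a bit more economical, since the observation that for $A=a+e_0$ the solutions in the two halves are in bijection via $x\leftrightarrow x+a$ collapses the paper's cases (iii) and (iv) into the single injectivity statement for $x\mapsto f(x)+g(x+a)$; the underlying substitution is, however, the same in both arguments.
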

\begin{proof}
Recall that $F$ is an APN function if and only if, for any nonzero $A\in {\Bbb F}_2^n$ and for $X,Y\in {\Bbb F}_2^n$,
 $F(X+A)+F(X)=F(Y+A)+F(Y)$ implies $X=Y$ or $X=Y+A$.
 Note that condition (2) means: If $f(x)+f(y)+g(u)+g(v)\ne 0$ if $u,v,x,y$ are four different elements in $\F_2^{n-1}$ with $u+v+x+y=0$.

Firstly, assume that $F$ is an APN function. We will show that $f$ and $g$ must satisfy the conditions $(1)$ and $(2)$.

Let $A=a\in ({\Bbb F}_2^{n-1})^\times$. For any $Y=y \in {\Bbb F}_2^{n-1}$, 
we must have $X=y\in {\Bbb F}_2^{n-1}$ or $X=y+a \in {\Bbb F}_2^{n-1}$ from $F(X+a)+F(X)=F(y+a)+F(y)$. Since $X \in {\Bbb F}_2^{n-1}$, we have $f(X+a)+f(X)=f(y+a)+f(y)$ from $F(X+a)+F(X)=F(y+a)+F(y)$. Thus, $f$ must be an APN function.
Next, for any $Y=y+e_0$ with $y \in {\Bbb F}_2^{n-1}$ we must have $X=y+e_0$ or $X=y+a+e_0$ from $F(X+a)+F(X)=F(y+e_0+a)+F(y+e_0)$.
Since $X=x+e_0$ for some $x\in {\Bbb F}_2^{n-1}$, we have $g(x+a)+g(x)=g(y+a)+g(y)$ from $F(X+a)+F(X)=F(y+e_0+a)+F(y+e_0)$. Hence, $g$ must be an APN function.
Thus, the condition $(1)$ must be satisfied.

Let $A=a\in ({\Bbb F}_2^{n-1})^\times$. For any $Y=y \in {\Bbb F}_2^{n-1}$, since $X=y$ or $X=y+a$, $F(X+a)+F(X)=F(y+a)+F(y)$ does not have a solution $X=x+e_0$ for $x\in {\Bbb F}_2^{n-1}$. Thus, $F(x+e_0+a)+F(x+e_0)\neq F(y+a)+F(y)$ holds for any $x,y \in {\Bbb F}_2^{n-1}$, therefore we must have $g(x+a)+g(x)\neq f(y+a)+f(y)$ for any $x,y \in {\Bbb F}_2^{n-1}$.
Thus, the condition $(2)$ must be satisfied.


Conversely, let us assume that the conditions $(1)$ and $(2)$ hold.
Assume $F(X+A)+F(X)=F(Y+A)+F(Y)$ with $A\neq 0$. We will prove that $X=Y$ or $X=Y+A$.
We divide this into the following four cases:
\begin{itemize}
    \item[(i)] $A=a\in ({\Bbb F}_2^{n-1})^\times$ and $Y=y \in {\Bbb F}_2^{n-1}$,
    \item[(ii)] $A=a\in ({\Bbb F}_2^{n-1})^\times$ and $Y=y+e_0$ with $y \in {\Bbb F}_2^{n-1}$,
    \item[(iii)] $A=a+e_0$ with $a\in {\Bbb F}_2^{n-1}$ and $Y=y$ with $y \in {\Bbb F}_2^{n-1}$,
    \item[(iv)] $A=a+e_0$ with $a\in {\Bbb F}_2^{n-1}$ and $Y=y+e_0$ with $y \in {\Bbb F}_2^{n-1}$.
\end{itemize}

Firstly, let us consider the case (i).  
If $X=x\in {\Bbb F}_2^{n-1}$, then we have $f(x+a)+f(x)=f(y+a)+f(y)$, hence $x=y$ or $x=y+a$ by $(1)$. Let $X=x+e_0$ with $x\in {\Bbb F}_2^{n-1}$, then we have $g(x+a)+g(x)=f(y+a)+f(y)$ which has no solution by $(2)$.
Therefore, $X=Y$ or $X=Y+A$ in case (i).

Next, we consider the case (ii).  
Assume $X=x\in {\Bbb F}_2^{n-1}$. Then, we have $f(x+a)+f(x)=g(y+a)+g(y)$ which has no solution by $(2)$.
If $X=x+e_0$ with $x\in {\Bbb F}_2^{n-1}$, then we have $g(x+a)+g(x)=g(y+a)+g(y)$, hence $x+e_0=y+e_0$ or $x+e_0=y+e_0+a$ by $(1)$.
Thus, we have $X=Y$ or $X=Y+A$ in case (ii).

Let us consider the case (iii).
If $X=x\in {\Bbb F}_2^{n-1}$, then we have $g(x+a)+f(x)+g(y+a)+f(y)=0$, hence
$x=y$.
If $X=x+e_0$ with $x\in {\Bbb F}_2^{n-1}$, then we have, again,  $f(x+a)+g(x)+g(y+a)+f(y)=0$. For the same reason as above, we have $x+e_0=y+(a+e_0)$. 
Therefore we have $x=y$ or  $x+e_0=y+(a+e_0)$. Thus, we have $X=Y$ or $X=Y+A$ in case (iii).

Lastly, we consider the case (iv).
If $X=x\in {\Bbb F}_2^{n-1}$, then we have $g(x+a)+f(x)+f(y+a)+g(y)=0$, hence
$x=(y+e_0)+(a+e_0)$.
If $X=x+e_0$ with $x\in {\Bbb F}_2^{n-1}$, then we have $f(x+a)+g(x)+f(y+a)+g(y)=0$. 
For the same reason as above, we have $x+e_0=y+e_0$.
Thus, we also have $X=Y$ or $X=Y+A$ in case (iv).

Hence, $F$ must be an APN function under the conditions $(1)$ and $(2)$.
\end{proof}

Now, let $F$ be a \textit{quadratic} APN function from ${\Bbb F}_2^{n}$ to ${\Bbb F}_2^m$. As before, for an element $e_0\not\in {\Bbb F}_2^{n-1}$, we define $f(x):=F(x)$ for $x\in {\Bbb F}_2^{n-1}$, and $g(x):=F(x+e_0)$ for $x\in {\Bbb F}_2^{n-1}$. In this case, we get two quadratic APN functions $f,g$ from ${\Bbb F}_2^{n-1}$ to ${\Bbb F}_2^m$, where $g(x)=f(x)+L(x)+c$ for an element $c:=F(e_0)+F(0)$ and a linear mapping $L(x):=B_F(x,e_0)$ with $B_F(x,e_0):=F(x+e_0)+F(x)+F(e_0)+F(0)$. In the following statement, we provide a necessary and sufficient condition that characterizes the APN-ness of the quadratic function  $F$ defined by $F(x):=f(x)$ and $F(x+e_0):=g(x)$, for $x\in {\Bbb F}_2^{n-1}$.

\begin{proposition}\label{prop2}
Let $f,g$ be quadratic APN functions from ${\Bbb F}_2^{n-1}$ to ${\Bbb F}_2^m$ such that $g(x)=f(x)+L(x)+c$, where $L$ is an ${\Bbb F}_2$-linear mapping from ${\Bbb F}_2^{n-1}$ to ${\Bbb F}_2^m$ and $c\in {\Bbb F}_2^m$.
Let $F$ be a function from ${\Bbb F}_2^n$ to ${\Bbb F}_2^{m}$ defined by $F(x):=f(x)$ and $F(x+e_0):=f(x)+L(x)+c$ for some fixed $e_0\in {\Bbb F}_2^{n} \backslash {\Bbb F}_2^{n-1}$, for $x\in {\Bbb F}_2^{n-1}$.
Then, $F$ is a quadratic APN function if and only if
\begin{equation}\label{eq:POTT}
{\Bbb F}_2^{n-1}\ni x\mapsto L(x)+B_f(x,A)\in {\Bbb F}_2^{m}
\end{equation}
are one-to-one mappings, for any $A\in {\Bbb F}_2^{n-1}$. 
\end{proposition}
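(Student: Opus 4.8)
The plan is to deduce the statement from Theorem~\ref{thprop1} by simplifying its condition $(2)$ in the quadratic case. Since $f$ and $g$ are assumed to be APN, condition $(1)$ of Theorem~\ref{thprop1} holds automatically, so $F$ is APN if and only if
\[
f(x+a)+f(x)+g(y+a)+g(y)\ne 0\qquad\text{for all } x,y\in{\Bbb F}_2^{n-1}\text{ and all } a\in({\Bbb F}_2^{n-1})^\times .
\]
First I would substitute $g=f+L+c$ and use the ${\Bbb F}_2$-linearity of $L$ to get $g(y+a)+g(y)=f(y+a)+f(y)+L(a)$, so the left-hand side becomes $\bigl(f(x+a)+f(x)\bigr)+\bigl(f(y+a)+f(y)\bigr)+L(a)$. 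Since $f$ is quadratic, $f(x+a)+f(x)=B_f(x,a)+f(a)+f(0)$; the constant terms cancel in pairs and only $B_f(x,a)+B_f(y,a)+L(a)$ remains, which by bilinearity of $B_f$ equals $B_f(x+y,a)+L(a)$. As $x+y$ runs over all of ${\Bbb F}_2^{n-1}$, condition $(2)$ is therefore equivalent to: $B_f(z,a)+L(a)\ne 0$ for every $z\in{\Bbb F}_2^{n-1}$ and every $a\in({\Bbb F}_2^{n-1})^\times$.

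It then remains to recognize this as the injectivity condition in the proposition. For fixed $A\in{\Bbb F}_2^{n-1}$, the map $x\mapsto L(x)+B_f(x,A)$ is ${\Bbb F}_2$-linear, as $L$ is linear and $x\mapsto B_f(x,A)$ is linear by bilinearity of $B_f$; hence it is one-to-one precisely when its kernel is trivial, i.e.\ when $L(x)+B_f(x,A)\ne 0$ for all nonzero $x$. Using the symmetry $B_f(x,A)=B_f(A,x)$, requiring this for every $A\in{\Bbb F}_2^{n-1}$ is the same as requiring $L(a)+B_f(z,a)\ne 0$ for every $a\in({\Bbb F}_2^{n-1})^\times$ and every $z\in{\Bbb F}_2^{n-1}$, which is exactly the reformulated condition $(2)$. (The instance $A=0$ reads ``$L$ is injective'', matching the instance $z=0$, since $B_f(0,a)=0$.)

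Finally, I would observe that under the hypotheses $F$ is automatically quadratic, so that the requirement ``$F$ is a quadratic APN function'' reduces to ``$F$ is APN''. Writing each $X\in{\Bbb F}_2^n$ as $X=\pi(X)+\chi(X)e_0$, where $\pi\colon{\Bbb F}_2^n\to{\Bbb F}_2^{n-1}$ is the projection along $e_0$ and $\chi$ is the linear form with kernel ${\Bbb F}_2^{n-1}$ and $\chi(e_0)=1$, one has $F(X)=f(\pi(X))+\chi(X)L(\pi(X))+\chi(X)c$: this is a sum of the quadratic function $f\circ\pi$, a function each of whose $m$ coordinates is a product of two ${\Bbb F}_2$-linear forms, and a linear function, hence quadratic. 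I do not expect a real obstacle; the only point needing care is the bookkeeping in the reduction of condition $(2)$ — in particular using bilinearity to collapse $B_f(x,a)+B_f(y,a)$ into $B_f(x+y,a)$ and then the symmetry of $B_f$ to swap the roles of the free and the nonzero variable, so that the condition lines up with injectivity of $x\mapsto L(x)+B_f(x,A)$ for all $A$.
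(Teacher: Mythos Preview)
Your proposal is correct and follows essentially the same route as the paper: reduce to Theorem~\ref{thprop1}, note that condition~$(1)$ is automatic, simplify condition~$(2)$ via $g=f+L+c$ and the bilinearity of $B_f$ to $L(a)+B_f(a,x+y)\ne 0$ for $a\ne 0$, and then identify this with triviality of the kernel of the linear map in Eq.~\eqref{eq:POTT} by swapping the roles of the two variables. Your explicit verification that $F$ is quadratic is a small addition the paper omits, but otherwise the arguments coincide.
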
 
\begin{proof}
We will check the conditions $(1)$ and $(2)$  in Theorem~\ref{thprop1}.


\noindent Since $f$ and $g=f+L+c$ are APN functions, the condition $(1)$ is satisfied.

The condition $(2)$ is equivalent to $f(x+a)+f(x)\neq 
g(y+a)+g(y)=f(y+a)+f(y)+L(a)$ for any $x,y\in {\Bbb F}_2^{n-1}$ if $a\neq 0$, that is, $L(a)+(f(x+a)+f(x))+(f(y+a)+f(y))\neq 0$ for any $x,y\in {\Bbb F}_2^{n-1}$ if $a\neq 0$, which means $L(a)+B_f(a,x+y)\neq 0$ if $a\neq0$, $a\in {\Bbb F}_2^{n-1}$. Note that $f(x+a)+f(x)+f(y+a)+f(y)=B_f(a,x)+B_f(a,y)=B_f(a,x+y)$, 
where the second equality holds since $f$ is quadratic.
We put $A:=x+y$ and think of $a$ as the variable $x$ 
in the linear equation defined by Eq.~\eqref{eq:POTT}.  Using the fact that Eq.~\eqref{eq:POTT} describes a linear function, we see that $L(a)+B_f(a,x+y)\neq 0$ if $a\neq0$
is exactly the condition in Eq.~\eqref{eq:POTT}.
\end{proof}


\section{Modifying quadratic APN functions on hyperplanes}\label{sec: 4}
In the previous section, we considered a method for constructing APN functions by concatenating \textit{two} APN functions defined on complementary hyperplanes. An alternative approach to constructing new APN functions involves modifying a \textit{single} given APN function on a hyperplane. In this case, the APN property is characterized as follows:

\begin{corollary}\cite[Corollary 1]{CharpinK17}\label{cor: Pascale and Gohar}
	Let $T_0=\{x \colon \operatorname{Tr}(x)=0\}$ and $G\colon \mathbb{F}_{2^n} \rightarrow \mathbb{F}_{2^n}$ be defined by 
	$$
	G(x)=F(x)+\operatorname{Tr}(x)L(x) ,
	$$
	where $F$ and $L$ are arbitrary mappings on $\mathbb{F}_{2^n}$ (where $L$ is not necessarily linear). Then, for every $a \in T_0$ we have
	$$
	D_a G(x)=(F(x)+F(x+a))+\operatorname{Tr}(x)(L(x)+L(x+a)).
	$$
	Suppose that the mappings $F(x)$ and $F'(x):=F(x)+L(x)$ are APN. Set $I_a=\operatorname{Im}\left(D_a F\right) \cap \operatorname{Im}\left(D_a F'\right)$. Then, $G$ is APN if and only if for any nonzero $a \in T_0$ and every $b \in I_a$ there is $x \in \mathbb{F}_{2^n}$ with
	$$
	D_a F(x)=b \text { and } \operatorname{Tr}(x)=1 \text {, or } D_a F'(x)=b \text { and } \operatorname{Tr}(x)=0 .
	$$
\end{corollary}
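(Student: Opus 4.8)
The plan is to recognize $G$ as an instance of the concatenation construction of Theorem~\ref{thprop1} and then reconcile the resulting criterion with the one stated above. First I would record the derivative identity in the corollary: since $\Tr$ is $\F_2$-linear, any $a\in T_0$ satisfies $\Tr(x+a)=\Tr(x)$, whence $G(x+a)=F(x+a)+\Tr(x)L(x+a)$ and therefore $D_aG(x)=\bigl(F(x)+F(x+a)\bigr)+\Tr(x)\bigl(L(x)+L(x+a)\bigr)$. The key structural observation is that $G$ coincides with $F$ on the hyperplane $T_0$ and with $F':=F+L$ on the complementary affine hyperplane $T_1:=\{x\in\F_{2^n}\colon\Tr(x)=1\}$. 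Fixing any $e_0\in\F_{2^n}$ with $\Tr(e_0)=1$ and setting $f:=F|_{T_0}$ and $g(x):=F'(x+e_0)$ for $x\in T_0$, one checks at once that $G(x)=f(x)$ for $x\in T_0$ and $G(x+e_0)=F'(x+e_0)=g(x)$ for $x\in T_0$, i.e.\ $G$ is exactly the function built from $f$ and $g$ in Theorem~\ref{thprop1} (with $m=n$ and $\F_2^{n-1}$ identified with $T_0$).

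I would then apply Theorem~\ref{thprop1}. Its condition~$(1)$ is automatic under the hypotheses of the corollary: a restriction of an APN function to an affine subspace is again APN, and precomposing with a translation preserves the APN property, so $F$ and $F'$ being APN forces $f$ and $g$ to be APN. It remains to unwind condition~$(2)$, namely $f(x+a)+f(x)+g(y+a)+g(y)\neq 0$ for all $x,y\in T_0$ and all nonzero $a\in T_0$. Since $f(x+a)+f(x)=D_aF(x)$ with $\Tr(x)=0$ and $g(y+a)+g(y)=D_aF'(y+e_0)$ with $\Tr(y+e_0)=1$, and $y+e_0$ ranges over all of $T_1$, condition~$(2)$ says precisely that for every nonzero $a\in T_0$ the sets $\{D_aF(x)\colon\Tr(x)=0\}$ and $\{D_aF'(z)\colon\Tr(z)=1\}$ are disjoint.

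The last step is to rephrase this disjointness in terms of $I_a=\operatorname{Im}(D_aF)\cap\operatorname{Im}(D_aF')$. The auxiliary fact I would use is that, because $F$ is APN and $a\in T_0$, the sets $\{D_aF(x)\colon\Tr(x)=0\}$ and $\{D_aF(x)\colon\Tr(x)=1\}$ are disjoint: a common value $b$ would possess the four pairwise distinct preimages $x,x+a$ (of trace $0$) and $x',x'+a$ (of trace $1$) under $D_aF$, contradicting APN-ness; likewise for $F'$. With $\operatorname{Im}(D_aF)$ and $\operatorname{Im}(D_aF')$ thus split along the trace, the statement ``$\{D_aF(x)\colon\Tr(x)=0\}$ is disjoint from $\{D_aF'(z)\colon\Tr(z)=1\}$'' becomes equivalent to ``every $b\in I_a$ lies in $\{D_aF(x)\colon\Tr(x)=1\}\cup\{D_aF'(x)\colon\Tr(x)=0\}$'', which is exactly the displayed condition that $D_aF(x)=b$ for some $x$ with $\Tr(x)=1$, or $D_aF'(x)=b$ for some $x$ with $\Tr(x)=0$.

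I expect the genuine subtlety to be the case $\Tr(a)=1$, which the displayed criterion never mentions and which one must verify is already subsumed by the conditions for $a\in T_0$. Routing through Theorem~\ref{thprop1} this is absorbed into its cases~(iii)--(iv); in a direct argument one would observe that for $\Tr(a)=1$ each coset $\{x,x+a\}$ meets $T_0$ in exactly one point, so $D_aG$ is governed by $F(x)+F'(x+a)$ on that point, and two distinct collisions $x_1\neq x_2$ would yield $D_cF(x_1)=D_cF'(x_1+a)$ with $c:=x_1+x_2$ a nonzero element of $T_0$, contradicting condition~$(2)$. A second point not to gloss over is that the equivalence in the third paragraph genuinely uses the APN-ness of \emph{both} $F$ and $F'$: without either hypothesis the $I_a$-criterion is strictly weaker than condition~$(2)$.
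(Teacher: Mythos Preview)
The paper does not give its own proof of this statement; it is quoted verbatim from \cite{CharpinK17} as a known result, so there is no in-paper argument to compare against. Your derivation is correct. Identifying $G$ with the concatenation of $f=F|_{T_0}$ and $g(x)=F'(x+e_0)$ places the problem inside Theorem~\ref{thprop1}, and your translation of condition~(2) into the $I_a$-criterion is the heart of the matter: writing $A_i=\{D_aF(x):\Tr(x)=i\}$ and $B_i=\{D_aF'(x):\Tr(x)=i\}$, APN-ness of $F$ and $F'$ gives the disjoint splittings $\operatorname{Im}(D_aF)=A_0\sqcup A_1$ and $\operatorname{Im}(D_aF')=B_0\sqcup B_1$, and then $I_a\subseteq A_1\cup B_0$ fails exactly on $A_0\cap B_1$, which is condition~(2). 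Your remark that the case $\Tr(a)=1$ is absorbed by cases~(iii)--(iv) of Theorem~\ref{thprop1} is also correct, as is the observation that the equivalence genuinely needs APN-ness of both $F$ and $F'$. Routing the corollary through Theorem~\ref{thprop1} is a nice illustration of that theorem's scope; the original proof in \cite{CharpinK17} works directly from the derivative formula for $D_aG$ without the concatenation viewpoint.
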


In this section, we consider in detail a particular case when $F$ is \textit{quadratic APN} and $L$ is \textit{linear}.

\subsection{A characterization for the quadratic case and the notion of H-equivalence}\label{subsec: 4.1}

First, we derive a necessary and sufficient condition for the APN-ness of the functions $F(x)+\Tr(x)L(x)$ on $\F_{2^n}$, where $F$ is a quadratic APN function and $L$ is a linear mapping on $\F_{2^n}$.

\begin{theorem}\label{th1}
Let $F$ be a quadratic APN function on ${\Bbb F}_{2^n}$ and $L$ an ${\Bbb F}_{2}$-linear mapping on ${\Bbb F}_{2^n}$. Let $e_0\in {\Bbb F}_{2^n}$ with $\Tr(e_0)=1$.  
Then, the mapping
$$G(x)=F(x)+\Tr(x)L(x),\quad\mbox{for}\quad x\in\F_{2^n},$$
is a quadratic APN function on ${\Bbb F}_{2^n}$ if and only if $L_a\colon T_0\ni x \mapsto L(x)+B_F(x,a+e_0)\in {\Bbb F}_{2^n}$ are one-to-one mappings from $T_0$ to ${\Bbb F}_{2^n}$ for any $a\in T_0$.
(Hence, $G(x)=F(x)+\Tr(x)L(x)$ is a quadratic APN function on ${\Bbb F}_{2^n}$ if, and only if, $L_a(x)=0$ implies $x=0$ for any $a\in T_0$).
\end{theorem}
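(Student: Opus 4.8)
The plan is to realize $G$ as a special case of the concatenation construction of Proposition~\ref{prop2}, taking the hyperplane $\F_2^{n-1}$ to be $T_0=\{x:\Tr(x)=0\}$ and the coset representative to be $e_0$ (which lies outside $T_0$ precisely because $\Tr(e_0)=1$). Write $f:=F|_{T_0}$. The first step is to put $G$ on the complementary coset $T_0+e_0$ into the shape required by Proposition~\ref{prop2}, namely $f+\widetilde L+c$ with $\widetilde L$ linear and $c$ constant. For $x\in T_0$ one has $G(x+e_0)=F(x+e_0)+L(x+e_0)$, since $\Tr(x+e_0)=1$; and because $F$ is quadratic, $F(x+e_0)=F(x)+B_F(x,e_0)+F(e_0)+F(0)$. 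Hence
\[
G(x+e_0)=f(x)+\widetilde L(x)+c,\qquad \widetilde L(x):=L(x)+B_F(x,e_0),\quad c:=F(e_0)+F(0)+L(e_0),
\]
where $\widetilde L$ is $\F_2$-linear because $B_F(\cdot,e_0)$ is linear when $F$ is quadratic.

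Next I would verify the hypotheses of Proposition~\ref{prop2}. The restriction $f=F|_{T_0}$ is quadratic and is APN because $F$ is APN on $\F_{2^n}$ (restricting the count of solutions of $D_aF(x)=b$ to $x\in T_0$, with $a\in T_0\setminus\{0\}$, can only decrease it); similarly $g:=f+\widetilde L+c$ is quadratic and APN, since adding an $\F_2$-linear map and a constant does not change the differential uniformity. Proposition~\ref{prop2} then gives: $G$ is a quadratic APN function on $\F_{2^n}$ if and only if $x\mapsto \widetilde L(x)+B_f(x,A)$ is one-to-one on $T_0$ for every $A\in T_0$.

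The last step is to rewrite this condition in the form claimed. Since $f=F|_{T_0}$, we have $B_f(x,A)=B_F(x,A)$ for $x,A\in T_0$; and bilinearity of $B_F$ (valid since $F$ is quadratic) yields $B_F(x,e_0)+B_F(x,A)=B_F(x,A+e_0)$. Therefore the map $x\mapsto \widetilde L(x)+B_f(x,A)$ equals $x\mapsto L(x)+B_F(x,A+e_0)=L_A(x)$, and the criterion becomes precisely ``$L_a$ is one-to-one from $T_0$ to $\F_{2^n}$ for every $a\in T_0$''. The parenthetical remark is immediate: $L_a$ is $\F_2$-linear on the $(n-1)$-dimensional space $T_0$, so injectivity is equivalent to $\ker L_a=\{0\}$, i.e.\ to $L_a(x)=0\Rightarrow x=0$.

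I do not anticipate a real obstacle: the proof is essentially a change of variables reducing to Proposition~\ref{prop2}. The points needing care are the bookkeeping of the constant $c$, the use of bilinearity of $B_F$ to merge $B_F(\cdot,e_0)$ and $B_F(\cdot,A)$ into $B_F(\cdot,A+e_0)$, and checking that $G$ is quadratic in the first place --- the latter either follows from the general fact that concatenating $f$ and $f+\widetilde L+c$ across complementary hyperplanes yields a quadratic map, or directly from the bilinearity of $(x,y)\mapsto\Tr(x)L(y)+\Tr(y)L(x)$.
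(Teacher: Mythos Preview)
Your proof is correct and follows essentially the same route as the paper's: both reduce $G$ to the concatenation setting of Proposition~\ref{prop2} with $f=F|_{T_0}$ and linear part $\widetilde L(x)=L(x)+B_F(x,e_0)$, then use bilinearity of $B_F$ to rewrite $\widetilde L(x)+B_f(x,a)$ as $L(x)+B_F(x,a+e_0)$. The only cosmetic difference is that the paper strips off the constant $c=F(e_0)+F(0)+L(e_0)$ by passing to an EA-equivalent auxiliary $G'$ before invoking Proposition~\ref{prop2}, whereas you apply the proposition directly to $G$ with $c$ included; since Proposition~\ref{prop2} already accommodates the constant, your version is slightly more direct but substantively identical.
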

\begin{proof}
Let $f:=F\vert_{T_0}$ be the restriction of $F$ to $T_0$; $f$ is a quadratic APN function from $T_0$ to ${\Bbb F}_{2^n}$. For $x\in T_0$, we have $F(x)+\Tr(x)L(x)=f(x)$ and $F(x+e_0)+\Tr(x+e_0)L(x+e_0)=f(x)+L(x)+B_F(x,e_0)+L(e_0)+F(e_0)+F(0)$. Let $G'$ be a function on ${\Bbb F}_{2^n}$ defined by $G'(x):=f(x)$ and $G'(x+e_0):=f(x)+L(x)+B_F(e_0,x)$ for $x\in T_0$, then $G'(x)=F(x)+\Tr(x)(L(x)+L(e_0)+F(e_0)+F(0))$ for $x \in {\Bbb F}_{2^n}$, which is EA-equivalent to $G(x)=F(x)+\Tr(x)L(x)$. By Proposition~\ref{prop2}, $G'$ is an APN function if and only if  $T_0\ni x\mapsto L(x)+B_F(x,e_0)+B_F(x,a)\in {\Bbb F}_{2^n}$ are one-to-one mappings for any $a\in T_0$. Thus, $G(x)=F(x)+\Tr(x)L(x)$ is a quadratic APN function on ${\Bbb F}_{2^n}$ if and only if $L_a\colon T_0\ni x \mapsto L(x)+B_F(x,a+e_0)\in {\Bbb F}_{2^n}$ are one-to-one mappings from $T_0$ to ${\Bbb F}_{2^n}$ for any $a\in T_0$.
\end{proof}

\begin{example}	
	Let $e_0$ be a fixed element of $\mathbb{F}_{2^n}$ such that $\Tr(e_0) = 1$. For the quadratic function $F(x)=x^3$ on $\F_2^n$, we consider (for different values of $n$) linear mappings $L$ on $\mathbb{F}_{2^n}$ with $L(e_0)=0$ satisfying the conditions of Theorem~\ref{th1}. For $F(x) = x^3$ on $\mathbb {F}_{2^4}$, there are 448 such mappings $L$; for $F(x) = x^3$ on $\mathbb{F}_{2^5}$, there are $4\,608$ such mappings $L$; and for $F(x) = x^3$ on $\mathbb{F}_{2^6}$, we found about $40\,000$ such mappings $L$.
\end{example}	

As the previous example indicates, it is possible to obtain many APN functions by modifying the values of a given APN function on a hyperplane using an affine function. This observation essentially leads to the following notion of equivalence.

\begin{definition}\label{def: H-equivalence}
    Let $H$ be a hyperplane of $\F_2^n$. Let $F$ and $G$ be APN functions on $\F_2^n$. We say that functions $F$ and $G$ are \textit{$H$-equivalent} if there exists an affine function $L\colon\F_2^n\to\F_2^n$ s.t. for all $x\in\F_{2^n}$ it holds that $G(x)=F(x)$ if $x\in H$, and $G(x)=F(x)+L(x)$ if $x \notin H$.
\end{definition}

The following result demonstrates that $H$-equivalence can be used to derive not only numerous APN functions from a single one but also many that are EA-inequivalent.

\begin{theorem}\label{rem: apns in dim 6}
    Up to EA-equivalence, every quadratic APN function on $\F_{2^6}$ has the form $\F_{2^6}\ni x \mapsto x^3 + \Tr(x)L(x)$, where $L$ is a linearized polynomial on $\F_{2^6}$. 
\end{theorem}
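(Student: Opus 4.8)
The plan is to reduce the claim to a finite computational verification, organized around Theorem~\ref{th1}. First I would recall that there are exactly $13$ EA-equivalence classes of quadratic APN functions on $\F_{2^6}$ (this is the classification from the literature — Edel--Pott and subsequent work — which may be cited here). Since for quadratic APN functions EA-equivalence coincides with CCZ-equivalence by~\cite{Yoshiara2012}, it suffices to exhibit, for each of these $13$ classes, a linearized polynomial $L$ on $\F_{2^6}$ such that $G(x) = x^3 + \Tr(x)L(x)$ is APN and lies in that class. The APN-ness of each candidate $G$ is not checked ad hoc but is guaranteed by the criterion of Theorem~\ref{th1}: fixing $e_0$ with $\Tr(e_0)=1$, one needs $L_a\colon T_0 \ni x \mapsto L(x) + B_F(x, a+e_0)$ to be injective on $T_0$ for every $a \in T_0$, where $F(x)=x^3$, so $B_F(x,y) = xy^2 + x^2y$. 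Because $F$ is fixed, $B_F(x, a+e_0)$ is an explicit bilinear expression, and the injectivity condition becomes a concrete linear-algebra condition on $L$ over $\F_2$, easily testable by computer.

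The body of the proof is then the table: for each of the $13$ classes I would display an explicit $L$ (as a linearized polynomial $\sum_i \ell_i x^{2^i}$ over $\F_{2^6}$, with $L(e_0)=0$ imposed for normalization, using that adding $\Tr(x)\cdot(\text{const})$ only changes $G$ by an EA-equivalence as in the proof of Theorem~\ref{th1}), verify via Theorem~\ref{th1} that $G_L(x) = x^3 + \Tr(x)L(x)$ is APN, and then identify its EA-class by computing a distinguishing invariant — the $\Gamma$-rank defined in the preliminaries, supplemented if necessary by the multiset of Walsh coefficients or the code-automorphism order, which together separate all $13$ classes in dimension $6$. That $x^3$ itself ($L=0$) gives one class is immediate; the remaining $12$ require searching over $L$, which the preceding example already indicates is feasible since roughly $40\,000$ admissible $L$ exist.

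The main obstacle is not conceptual but is the completeness of the search: one must be sure that the admissible $L$'s actually realize \emph{all} $13$ classes and not merely some of them — a priori the family $\{x^3 + \Tr(x)L(x)\}$ could fail to meet some class. This is where the argument is genuinely computational and cannot be shortcut; I would carry it out by enumerating (a sufficient portion of) the $L$ satisfying the Theorem~\ref{th1} condition, binning the resulting APN functions by $\Gamma$-rank (and finer invariants within a rank), and checking that all $13$ bins are hit. A secondary, minor point to handle carefully is the normalization: Definition~\ref{def: H-equivalence} allows an affine $L$, but the statement asserts a \emph{linearized} (hence $\F_2$-linear, zero at $0$) $L$; the reduction in the proof of Theorem~\ref{th1} shows the affine part can be absorbed, so one should state explicitly that replacing $L$ by its linear part changes $G$ only up to EA-equivalence, preserving the class. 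Assembling these pieces — the count of $13$ classes, the Theorem~\ref{th1} test, the invariant computations, and the exhaustive binning — completes the proof.
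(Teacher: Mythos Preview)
Your proposal is correct and follows essentially the same approach as the paper: both reduce the claim to a finite computational verification by exhibiting, for each of the $13$ EA-classes of quadratic APN functions on $\F_{2^6}$, an explicit linearized polynomial $L$ such that $x^3+\Tr(x)L(x)$ lands in that class. The only noteworthy difference is in the class-identification step: the paper simply lists the thirteen $L_i$'s in a table and certifies EA-equivalence to Dillon's representatives $D_k$ directly via Magma, rather than relying on $\Gamma$-rank and auxiliary invariants as you suggest (which is slightly more delicate, since you would need to know that your chosen invariants actually separate all $13$ classes).
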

\begin{proof}
First, we observe $H$-equivalence can be studied in the representatives of EA-equivalence classes. Let $G$ be an APN function on $\F_{2^n}$ defined by $G(x)=F(x)+1_{H}(x)L(x)$, for $x\in\F_2^n$, where $H$ is a hyperplane given by $H=\{x \in\F_2^n \colon a\cdot x=\beta\}$, for some $a\in\F_2^n$ and $\beta \in \F_2$. Consider the function $G'=A\circ G \circ B+C$ which is EA-equivalent to $G$, i.e., $A,B\colon\F_2^n\to\F_2^n$ are affine invertible mappings and $C\colon\F_2^n\to\F_2^n$ is affine. Denote by $1_H\colon\F_2^n\to\F_2$ the indicator function of $H\subset \F_2^n$, i.e., $1_H(x)=1$ if $x\in H$ and $1_H(x)=0$ if $x\notin H$. By definition of $G'$, we have 
\begin{equation*}
	\begin{split}
		G'(x)&=A(F(B(x))+C(x)+A(1_{H}(B(x))L(B(x))))\\
		     &=F'(x)+1_{H}(B(x))L'(x)\\
		     &=F'(x)+1_{H'}(x)L'(x),
	\end{split}
\end{equation*}
where $F'=A\circ F \circ B+C$, $L'=A\circ L \circ B$ and the hyperplane $H'$ is defined by $H'=\{x \in\F_2^n \colon a\cdot B(x)=\beta\}$. In this way, it is enough to show that a representative of every EA-equivalence class of quadratic APN functions on $\F_2^6$ can be obtained from $\F_{2^6}\ni x \mapsto x^3$ using $H$-equivalence.

\begin{table}[h!]
	\centering
    \caption{APN functions $G_i(x)=x^3+\Tr(x)L_i(x)$ on $\F_{2^6}$ and their EA-equivalence to Dillon's quadratic APN functions $D_k(x)$.}
	\scalebox{0.95}{$\begin{tabular}{|c|l|c|c|}
			\hline
			$G_i$ & \multicolumn{1}{|c|}{$L_i(x)$} &  $D_k$ \\ \hline
			$G_1$ & $L_1(x)=0$ & $D_1$ \\
			$G_2$ & $L_2(x)=\alpha^{42}x+\alpha^{3}x^2+\alpha^{34}x^{2^2}+\alpha^{59}x^{2^3}+\alpha^{59}x^{2^4}+\alpha^{12}x^{2^5}$      & $D_{10}$  \\
			$G_3$ & $L_3(x)=\alpha^{18}x+\alpha^{60}x^2+\alpha^{17}x^{2^2}+\alpha^{4}x^{2^3}+\alpha^{17}x^{2^4}+\alpha^{4}x^{2^5}$        & $D_2$  \\
			$G_4$ & $L_4(x)=\alpha^{18}x+\alpha^{60}x^2+\alpha^{57}x^{2^2}+\alpha^{7}x^{2^3}+\alpha^{32}x^{2^4}+\alpha^{62}x^{2^5}$      & $D_3$  \\
			$G_5$ & $L_5(x)=\alpha^{42}x+\alpha x^2+\alpha^{29}x^{2^2}+\alpha^{55}x^{2^3}+\alpha^{9}x^{2^4}+\alpha^{56}x^{2^5}$         & $D_5$    \\
			$G_6$ & $L_6(x)=\alpha^{42}x+\alpha^{21}x^2+\alpha^{4}x^{2^3}+\alpha^{48}x^{2^4}+\alpha^{16}x^{2^5}$                      & $D_6$     \\
			$G_7$ & $L_7(x)=\alpha^{42}x+\alpha^{19}x^2+\alpha^{51}x^{2^2}+\alpha^{59}x^{2^3}+\alpha^{26}x^{2^4}+\alpha^{38}x^{2^5}$   & $D_7$   \\
			$G_8$ & $L_8(x)=\alpha^{42}x+\alpha^{19}x^2+\alpha^{60}x^{2^2}+\alpha^{11}x^{2^3}+\alpha^{25}x^{2^4}+\alpha^{13}x^{2^5}$    & $D_{12}$  \\
			$G_9$ & $L_{9}(x)=\alpha^{42}x+\alpha^{21}x^2+\alpha^{22}x^{2^2}+\alpha^{31}x^{2^3}+\alpha^{15}x^{2^4}+\alpha^{61}x^{2^5}$ & $D_{13}$ \\
			$G_{10}$ & $L_{10}(x)=\alpha^{42}x+\alpha^{47}x^2+\alpha^{35}x^{2^2}+\alpha^{54}x^{2^3}+\alpha^{23}x^{2^4}+\alpha^{27}x^{2^5}$  & $D_8$ \\
			$G_{11}$ & $L_{11}(x)=\alpha^{42} x+\alpha^{21} x^2+ \alpha^{23} x^{2^2}+\alpha^{32} x^{2^3}+ \alpha^{14} x^{2^4}+ \alpha^{51} x^{2^5}$ & $D_{11}$ \\
			$G_{12}$ & $L_{12}(x)=\alpha^{42} x+\alpha^{21} x^2+ \alpha^4 x^{2^2}+\alpha^{56} x^{2^3}+ \alpha^{17} x^{2^4}+ \alpha^{20} x^{2^5}$  & $D_4$ \\
			$G_{13}$ & $L_{13}(x)=\alpha^{42} x+\alpha^{21} x^2+ \alpha^{27} x^{2^3}+ \alpha^{34} x^{2^4}+ \alpha^{52} x^{2^5}$ & $D_9$ \\ \hline
		\end{tabular}$}
	\label{table 1}
\end{table}
%
 Let the multiplicative group of $\F_{2^6}$ be defined by $(\F_{2^6})^*=\langle a \rangle$, where $\alpha^6 + \alpha^4 + \alpha^3 + \alpha + 1=0$. In Table~\ref{table 1}, we give the linearized polynomials $L_i\colon\F_{2^6}\to\F_{2^6}$ that define quadratic APN functions $G_i(x)=F(x)+\Tr(x)L_i(x)$. We also indicate, which of these functions are EA-equivalent to the $k$-th Dillon APN function, whose univariate representations can be found in~\cite[Table 4]{pott}. The EA-equivalence was verified using Magma~\cite{MAGMA:1997}.
\end{proof}

\begin{remark}
1. In~\cite{Edel_non_quadratic_APN:200959}, it was shown that switching cannot be used to obtain representatives of all EA-equivalence classes of quadratic APN functions starting from a single class. However, this can be achieved using $H$-equivalence with the hyperplane $H = \{x\in \F_{2^n} \colon \Tr(x) = 0\}$. Indeed, by Theorem~\ref{th1}, two EA-inequivalent quadratic APN functions on $\F_{2^6}$ have the form $F(x)=x^3+\Tr(x)L(x)$ and $F'(x)=x^3+\Tr(x)L'(x)$, from what follows that $F'(x)=F(x)+\Tr(x)L''(x)$, where $L'':=L+L'$ is a linear mapping on $\F_{2^6}$.

\noindent 2. For an $(n,n)$-function $F$, the mapping $W_F\colon\F_{2^n}\times\F_{2^n}\to\mathbb{Z}$ defined by $W_F(a,b)=\sum_{x\in {\Bbb F}_{2^n}}(-1)^{\Tr(bF(x)+ax)}$, where $a,b\in\F_{2^n},b\in \F_{2^n}^\times$, is called the \textit{Walsh transform} of $F$. The  multiset $\mathcal{W}_F=\{ * \ W_F(a,b)\colon a\in {\Bbb F}_{2^n}, b\in {\Bbb F}_{2^n}^\times  \ * \}$ is called the \textit{Walsh spectrum} of $F$. If $n$ is even, an APN function $F$ on $\F_{2^n}$ is said to have the \textit{the classical Walsh spectrum} if $$\begin{gathered}
\mathcal{W}_F=\left\{* \ 0\left[2^{n-2} \cdot\left(2^n-1\right)\right], \pm 2^{\frac{n+2}{2}}\left[\frac{1}{3}\left(2^n-1\right) \cdot\left(2^{n-3} \pm 2^{\frac{n-4}{2}}\right)\right],\right. \\
\left. \pm 2^{\frac{n}{2}}\left[\frac{2}{3}\left(2^n-1\right) \cdot\left(2^{n-1} \pm 2^{\frac{n}{2}-1}\right)\right] \ * \right\}.
\end{gathered}$$ 
Otherwise, $F$ is said to have a \textit{non-classical Walsh spectrum}; see \cite{pott}. Note that most of the known examples and constructions of quadratic APN functions on $\F_{2^n}$ have the classical Walsh spectrum and only a few sporadic examples with a non-classical spectrum are known; for the recent advances in such functions, we refer to~\cite{BeierleLP22}. Note that, up to EA-equivalence, the function $G_7$ in Table~\ref{table 1} is the only quadratic APN function on $\F_{2^6}$ 
with a non-classical Walsh spectrum since its Walsh transform takes the values in the set $\{0,\pm 8, \pm 16, \pm 32\}$. Remarkably, this function can be constructed via $H$-equivalence from the Gold APN function $\F_{2^6} \ni x \mapsto x^3$ with the classical Walsh spectrum. 
\end{remark}

\subsection{A necessary and sufficient condition for the APN-ness of $G(x)=x^3+\operatorname{Tr}(x)L(x)$}\label{subsec: 4.2}
In the previous subsection, we showed that one can obtain many EA-inequivalent APN functions by modifying the mapping $x\in\F_{2^n}\mapsto x^3$ on the ``trace-one'' hyperplane. In this regard, it is natural to ask: When does the mapping $G(x)=x^3+Tr(x)L(x)$ define an APN function $G$ on $\F_{2^n}$? In the following theorem, we provide an answer to this question using certain exponential sums.
\begin{theorem}\label{th: Kloosterman sums iff}
	Let $G$ be a function from $\mathbb{F}_{2^n}$ to $\mathbb{F}_{2^n}$ defined as $$G(x)=x^3+ \Tr(x)L(x).$$ Then, $G$ is APN if and only if
	\begin{equation}\label{eq: iff condition}
	    \sum_{x \in \mathbb{F}_{2^n}\setminus\{0,1\}}(-1)^{ \Tr\left(\frac{x^2L(x^2+x)}{(x^2+x)^3}\right) }-\frac{1}{2}\sum_{x \in \mathbb{F}_{2^n}\setminus\{0,1\}}(-1)^{ \Tr\left(\frac{L(x^2+x)}{(x^2+x)^3}\right) } =2^{n-1}-1.
	\end{equation}
\end{theorem}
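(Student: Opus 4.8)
The plan is to apply Theorem~\ref{th1} with $F(x)=x^3$ to rewrite the APN-ness of $G$ as the nonexistence of solutions of a single quadratic equation, then to count those solutions with additive characters, and finally to recognise the two sums in~\eqref{eq: iff condition} after a $2$-to-$1$ substitution. Throughout, $L$ is an $\F_2$-linear mapping, so $G$ is quadratic and Theorem~\ref{th1} applies.

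First I would compute $B_F$ for $F(x)=x^3$: since $B_F(x,y)=(x+y)^3+x^3+y^3=x^2y+xy^2=xy(x+y)$, we get $B_F(x,a+e_0)=x(a+e_0)(x+a+e_0)$, which is $\F_2$-linear in $x$. As $a\mapsto a+e_0$ is a bijection from $T_0$ onto $T_1:=\{x\in\F_{2^n}\colon\Tr(x)=1\}$, Theorem~\ref{th1} says that $G$ is APN if and only if, for every $b\in T_1$, the equation $L(x)+x^2b+xb^2=0$ has no solution $x\in T_0\setminus\{0\}$; equivalently,
\[
N:=\#\bigl\{(x,b)\in(T_0\setminus\{0\})\times T_1 \colon L(x)+x^2b+xb^2=0\bigr\}=0 .
\]
So it suffices to evaluate $N$ and to verify that $N=0$ is equivalent to~\eqref{eq: iff condition}.

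For the evaluation I would expand, writing $\mathbf{1}[\cdot]$ for the indicator of a condition,
\[
N=\sum_{x\in T_0\setminus\{0\}}\ \sum_{b\in\F_{2^n}}\frac{1-(-1)^{\Tr(b)}}{2}\,\mathbf{1}\bigl[xb^2+x^2b+L(x)=0\bigr].
\]
For fixed $x\in T_0\setminus\{0\}$ the substitution $b=x\beta$ turns the inner equation into $\beta^2+\beta=L(x)/x^3$, which has $1+(-1)^{\Tr(L(x)/x^3)}$ solutions; when solutions exist, the two roots in $b$ are $b_0$ and $b_0+x$, which have the same trace because $x\in T_0$. Hence $\sum_b\mathbf{1}[\cdot]=1+(-1)^{\Tr(L(x)/x^3)}$, while $\sum_b(-1)^{\Tr(b)}\mathbf{1}[\cdot]$ equals $2(-1)^{\Tr(b_0(x))}$ when $\Tr(L(x)/x^3)=0$ and $0$ otherwise, the quantity $(-1)^{\Tr(b_0(x))}$ being well defined precisely because the two roots differ by $x\in T_0$. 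Summing over $x$,
\[
N=\frac{2^{n-1}-1}{2}+\frac12\sum_{x\in T_0\setminus\{0\}}(-1)^{\Tr(L(x)/x^3)}-\sum_{\substack{x\in T_0\setminus\{0\}\\ \Tr(L(x)/x^3)=0}}(-1)^{\Tr(b_0(x))}.
\]

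To finish I would use that $z\mapsto z^2+z$ is $\F_2$-linear with kernel $\{0,1\}$ and image $T_0$, hence a $2$-to-$1$ surjection $\F_{2^n}\setminus\{0,1\}\to T_0\setminus\{0\}$. The second sum above then equals $\tfrac12\sum_{z\neq0,1}(-1)^{\Tr(L(z^2+z)/(z^2+z)^3)}$. For the last sum, putting $x=z^2+z$, choosing $\beta_0$ with $\beta_0^2+\beta_0=L(x)/x^3$ and $b_0=x\beta_0$, the key identity is
\[
\Tr\!\left(\frac{z^2L(z^2+z)}{(z^2+z)^3}\right)=\Tr\bigl(z^2(\beta_0^2+\beta_0)\bigr)=\Tr(z\beta_0)+\Tr(z^2\beta_0)=\Tr\bigl((z^2+z)\beta_0\bigr)=\Tr(b_0),
\]
while for an $x$ with $\Tr(L(x)/x^3)=1$ the two preimages $z,z+1$ contribute opposite signs to $\sum_{z\neq0,1}(-1)^{\Tr(z^2L(z^2+z)/(z^2+z)^3)}$ and cancel, and for $\Tr(L(x)/x^3)=0$ they contribute equally. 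This gives $\sum_{z\neq0,1}(-1)^{\Tr(z^2L(z^2+z)/(z^2+z)^3)}=2\sum_{x\in T_0\setminus\{0\},\,\Tr(L(x)/x^3)=0}(-1)^{\Tr(b_0(x))}$, so substituting back yields $N=\tfrac12(2^{n-1}-1)+\tfrac14\Sigma_2-\tfrac12\Sigma_1$, where $\Sigma_1$ and $\Sigma_2$ denote respectively the first and second sums in~\eqref{eq: iff condition}; then $N=0$ is precisely $\Sigma_1-\tfrac12\Sigma_2=2^{n-1}-1$. I expect the main obstacle to be exactly this last step: spotting the substitution $b=x\beta$, verifying that $(-1)^{\Tr(b_0(x))}$ depends only on $x$, and matching it with $(-1)^{\Tr(z^2L(x)/x^3)}$ through the short trace computation above; once this is in place, only bookkeeping over the fibres of $z\mapsto z^2+z$ remains.
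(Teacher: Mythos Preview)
Your argument is correct. Both your proof and the paper's reduce to the same counting problem---the number of pairs $(x,b)$ with $x\in T_0\setminus\{0\}$, $\Tr(b)=1$, and $L(x)+x^2b+xb^2=0$ must vanish---and both finish via the $2$-to-$1$ substitution $z\mapsto z^2+z$ onto $T_0\setminus\{0\}$. The executions differ in two places. First, you invoke Theorem~\ref{th1} at the outset, which immediately collapses the APN condition to the injectivity of a single family of linear maps on $T_0$; the paper instead works directly from the derivative equation $x^2a+a^2x+\Tr(x)L(a)+\Tr(a)L(x)=0$ and splits into the cases $\Tr(a)=0$ and $\Tr(a)=1$, using a symmetry argument to show the second case reduces to the first. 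Second, to evaluate the count you solve the quadratic in $b$ explicitly through $b=x\beta$ and track the trace of the root directly, whereas the paper introduces an extra character-sum variable $\beta$, forms two auxiliary sums $s_0,s_1$, and determines their values by analysing when the inner sum over $x$ is nonzero. Your route is shorter and more transparent, and the identification $\Tr(b_0)=\Tr\bigl(z^2L(z^2+z)/(z^2+z)^3\bigr)$ is exactly the step that in the paper appears only implicitly through the case $\beta a^3+\beta^2a^6=a^2$; the paper's version, on the other hand, is self-contained in that it does not appeal to Theorem~\ref{th1}, although it effectively re-derives the same reduction along the way.
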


\begin{proof}
    Assume that for every $a, b \in \mathbb{F}_{2^n}$ with $a \neq 0$, the equation $G(x + a) + G(x) = b$ has at most two solutions. Since $G(x)$ is a quadratic function, we just count the number of solutions of the equation $G(x + a) + G(x) + G(a) = 0$. This gives
	\begin{equation*}
		\begin{split}
			G(x+a)+G(x)+G(a) &= (x+a)^3+ \Tr(x+a)L(x+a)\\&+x^3+ \Tr (x)L(x)+a^3+ \Tr(a)L(a) = 0.
		\end{split}
	\end{equation*}
	After simplification, we have
	\begin{equation}\label{eq: 1.1} x^2a + a^2x +  \Tr(x)L(a) +  \Tr(a)L(x) = 0.
	\end{equation}	
 We have to show that this equation has only two solutions for $a\ne 0$. Note that we can see immediately two solutions, 
namely $x=0$ and $x=a$. 

   \noindent \textbf{I.} Consider the case $\Tr(a) = 0$. Then, Eq.~\eqref{eq: 1.1} becomes
	\begin{equation}\label{eq: 1.2}
	x^2a + a^2x +  \Tr(x)L(a) = 0.
	\end{equation}
 We show that this equation has exactly two solutions if and only if the condition in Eq.~\eqref{eq: iff condition} holds. 
	Assume that $ \Tr(x) = 0$. Then, Eq.~\eqref{eq: 1.2} becomes
	$$x^2a + a^2x = 0.$$
	The above equation has exactly two solutions $x\in\{0, a\}$. Now, we show that the case $ \Tr(x) = 1$ gives no solutions if and only if Eq.~\eqref{eq: iff condition} holds. Assume that $\Tr(x) = 1$. Then, Eq.~\eqref{eq: 1.2} becomes
	\begin{equation}\label{eq: 01}
	    x^2a + a^2x + L(a) = 0.
	\end{equation}
	Let $H = \{x\in\mathbb{F}_{2^n}\colon \Tr(x) = 0\}, \hat{H}=\mathbb{F}_{2^n}\backslash H$ and $H^\times=H\backslash\{0\}$. 
 Let
 \begin{equation*}
     \begin{split}
         t_0=&\#\{x\in H, a \in H^\times \colon x^2a + a^2x + L(a) = 0\},\mbox{ and}\\
	t_1=&\#\{x\in \hat{H}, a \in H^\times \colon x^2a + a^2x + L(a) = 0\}.
     \end{split}
 \end{equation*}
    Then, we define
	\begin{equation*}
	    \begin{split}
        s_0=&\sum_{\beta\in \mathbb{F}_{2^n}, a\in H^\times, x \in H }(-1)^{ \Tr(\beta(x^2a+a^2x+L(a)))}=2^nt_0, \\ 
         s_1=&\sum_{\beta\in \mathbb{F}_{2^n}, a\in H^\times, x \in \mathbb{F}_{2^n}}(-1)^{ \Tr(\beta(x^2a+a^2x+L(a)))}=2^n(t_0+t_1).
	    \end{split}
	\end{equation*}
	In this way, the value of $t_1$ is given by
	$$t_1=\frac{s_1-s_0}{2^n}.$$
 First, we consider the expressions of $s_1$ and $s_0$:
    \begin{equation}\label{eq: s1}
	\begin{split}
			s_1&=\sum_{\beta\in \mathbb{F}_{2^n}, a\in H^\times}(-1)^{ \Tr(\beta L(a))}\sum_{x\in \mathbb{F}_{2^n}}(-1)^{ \Tr(\beta(x^2a+a^2x))}\\
			&=\sum_{\beta\in \mathbb{F}_{2^n}, a\in H^\times}(-1)^{ \Tr(\beta L(a))}\sum_{x\in \mathbb{F}_{2^n}}(-1)^{ \Tr(\beta a^3((\frac{x}{a})^2+(\frac{x}{a})))},\\
        \end{split}
   \end{equation}
   and similarly
   \begin{equation}\label{eq: s0}
   \begin{split}
		s_0&=
  \sum_{\beta\in \mathbb{F}_{2^n}, a\in H^\times} (-1)^{ \Tr(\beta L(a))} \sum_{x \in H }(-1)^{ \Tr(\beta(x^2a+a^2x))}\\
			&=\sum_{\beta\in \mathbb{F}_{2^n}, a\in H^\times} (-1)^{ \Tr(\beta L(a))} \sum_{x \in H }(-1)^{ \Tr\left(\beta a^3 \left( (\frac{x}{a})^2 + (\frac{x}{a})\right)\right)}.
		\end{split}
	\end{equation}
Since both expressions depend on the evaluation of $\Tr(\beta a^3 ((\frac{x}{a})^2 + (\frac{x}{a})))$, let us consider the solutions of the equation $\Tr(\beta a^3 ((\frac{x}{a})^2 + (\frac{x}{a})))=0$.  Note that for $\beta\in\F_{2^n}$ and a fixed $a\in\F_{2^n}$ with $\Tr(a)=0$ we have that
	$$ \Tr\left(\beta a^3 \left( \left(\frac{x}{a}\right)^2 + \left(\frac{x}{a}\right)\right)\right)=0$$
	holds for all $x \in H$ if and only if 
	$$\Tr\left(x^2\left(\frac{\beta a^3}{a^2}+\frac{\beta^2a^6}{a^2}\right)\right) = 0\iff \left(\frac{\beta a^3}{a^2}+\frac{\beta^2a^6}{a^2}\right) = 0 \mbox{ or } 1.$$
	First, we consider
	$$\left(\frac{\beta a^3}{a^2}+\frac{\beta^2a^6}{a^2}\right) = 0.$$
	Then, we have the following cases:
	\begin{itemize}
		\item[(1)] $\beta = 0$,
		\item[(2)] $\beta a^3 + \beta^2a^6=0$ 
		if and only if $\beta = \frac{1}{a^3}$
  (provided $\beta \ne 0$).
        \end{itemize}
  Now, we consider the case when
		$$\left(\frac{\beta a^3}{a^2}+\frac{\beta^2a^6}{a^2}\right) = 1.$$
In this case, $\Tr\left(\beta a^3 \left( \left(\frac{x}{a}\right)^2 + \left(\frac{x}{a}\right)\right)\right)=0$ holds for all $x\in H$, but 
not for all $x\in\mathbb{F}_{2^n}$, hence this case occurs just
in Eq.~\eqref{eq: s0}, but not in Eq.~\eqref{eq: s1}. The latter corresponds to the following case
	\begin{itemize}
		\item[(3)] $\beta a^3 + \beta^2a^6 = a^2.$
	\end{itemize}
        Considering these cases, we can now compute the values of $s_1$ and $s_0$. With the first two cases, Eq.~\eqref{eq: s1} becomes 
        \begin{equation}\label{eq: s1a final}
		\begin{split}
			s_1&=\sum_{\beta\in \mathbb{F}_{2^n}, a\in H^\times}(-1)^{ \Tr(\beta L(a))}\sum_{x\in \mathbb{F}_{2^n}}(-1)^{ \Tr(\beta a^3((\frac{x}{a})^2+(\frac{x}{a})))}\\
			&=2^n(2^{n-1}-1)+2^n\left(\sum_{a\in H^\times}(-1)^{ \Tr\left(\frac{L(a)}{a^3}\right)}\right),
		\end{split}
	\end{equation}
        where the first term corresponds to case (1) $\beta = 0$, while the second term corresponds to case (2) $\beta a^3 = 1$. 
        Consider now Eq.~\eqref{eq: s0}. Let $\gamma=\beta a^3$. Then, we have $\beta=\frac{\gamma}{a^3}$. From case (3), we have that $\gamma+\gamma^2=a^2$. Therefore, we obtain
	\begin{equation*}
		\begin{split}
			s_0=&2^{n-1}(2^{n-1}-1)+2^{n-1}\sum_{a\in H^\times}(-1)^{ \Tr\left(\frac{L(a)}{a^3}\right)}\\+&2^{n-1}\left(\sum_{\substack{a\in H^\times s.t. \; a^2=\gamma+\gamma^2 \\ for \; \gamma \in \mathbb{F}_{2^n}\backslash\{0,1\}}}(-1)^{ \Tr\left(\gamma \frac{L(a)}{a^3}\right)}\right),
		\end{split}
	\end{equation*}
    where the first two terms are again obtained as in Eq.~\eqref{eq: s1a final}. Now, substitute $a = \sqrt{\gamma+\gamma^2}$, the above equation becomes
	\begin{equation}\label{eq: s1 final}
		\begin{split}
			s_0=&2^{n-1}(2^{n-1}-1)+2^{n-1}\sum_{a\in H^\times}(-1)^{ \Tr\left(\frac{L(a)}{a^3}\right)}\\+&2^{n-1}\left(\sum_{\gamma \in \mathbb{F}_{2^n}\setminus\{0,1\}}(-1)^{ \Tr\left( \frac{\gamma L(\sqrt{\gamma+\gamma^2})}{(\sqrt{\gamma+\gamma^2})^3}\right)}\right)\\
            =&\frac{s_1}{2} + 2^{n-1}\left(\sum_{\gamma \in \mathbb{F}_{2^n}\setminus\{0,1\}}(-1)^{ \Tr\left( \frac{\gamma L(\sqrt{\gamma+\gamma^2})}{(\sqrt{\gamma+\gamma^2})^3}\right)}\right).
		\end{split}
	\end{equation}
Eq.~\eqref{eq: 1.2} has exactly two solutions 
for $a\ne 0$ with $\Tr(a)=0$ if and only if $t_1=s_1-s_0=0$, hence $s_0=s_1$. Substituting $s_0=s_1$ in Eq.~\eqref{eq: s1 final} and replacing $\gamma$ with $x^2$, we get
    \begin{equation}\label{eq: s1 final value}
        s_1=2^n\sum_{x \in \mathbb{F}_{2^n}\setminus\{0,1\}}(-1)^{ \Tr\left(\frac{x^2L(x^2+x)}{(x^2+x)^3}\right) }.
    \end{equation}
    Substituting the obtained above value of $s_1$ in Eq.~\eqref{eq: s1a final}, 
    we get
    \begin{equation*}
        \sum_{x \in \mathbb{F}_{2^n}\setminus\{0,1\}}(-1)^{ \Tr\left(\frac{x^2L(x^2+x)}{(x^2+x)^3}\right) }-\sum_{a\in H^\times}(-1)^{ \Tr\left(\frac{L(a)}{a^3}\right)} =2^{n-1}-1,
    \end{equation*}
    which is equivalent to the original condition in Eq.~\eqref{eq: iff condition} since $$\sum_{a\in H^\times}(-1)^{ \Tr\left(\frac{L(a)}{a^3}\right)} = \frac{1}{2}\sum_{x \in \mathbb{F}_{2^n}\setminus\{0,1\}}(-1)^{ \Tr\left(\frac{L(x^2+x)}{(x^2+x)^3}\right) },$$ because $x^2+x$ runs through all the elements in  $H^\times$ twice if $x\in\mathbb{F}_{2^n}\setminus\{0,1\}$.  This completes the proof of the first case $\Tr(a) = 0$. \ \\

    \noindent\textbf{II.} Consider the case $\Tr(a) = 1$. Then, Eq.~\eqref{eq: 1.1} becomes
	\begin{equation}\label{eq: 1.2a}
	x^2a + a^2x +  \Tr(x)L(a) + L(x)= 0.
	\end{equation}
	Assume that $ \Tr(x) = 0$. Then, Eq.~\eqref{eq: 1.2a} becomes
	$$x^2a + a^2x + L(x) = 0,$$
    which is symmetric to Eq.~\eqref{eq: 01} w.r.t. a change of the variables. Assuming that there is a solution $x\ne 0$ with $\Tr(x)=0$, one can find $a\in\F_{2^n}$ s.t. it satisfies Eq.~\eqref{eq: 01}. 
    Case \textbf{I} shows that $\Tr(a)=0$, a contradiction.
    \\
    
    \noindent Assume now that $ \Tr(x) = 1$. 
    In this case,
$$x^2a+a^2x+L(a)+L(x) = 0$$
or, putting $y=x+a$,
\begin{equation}\label{eq: 2}
y^2a+a^2y+L(y)=0.\end{equation}
One solution is $y=0$, hence $x=a$. But for a given $y\ne 0$ with $\Tr(y)=0$, there is no $a$ with $\Tr(a)=1$ which satisfies Eq.~\eqref{eq: 2}, as in case \textbf{I}, hence also for $\Tr(a)=1$ we have the only two solutions
$x\in\{0,a\}$ for Eq.~\eqref{eq: 1.1}.

     In this way, we conclude that $G$ is APN if and only if the condition in Eq.~\eqref{eq: iff condition} holds.
\end{proof}
\begin{remark}
    One can check with a computer algebra system that linearized polynomials in Table~\ref{table 1} indeed satisfy the APN condition given in Theorem~\ref{th: Kloosterman sums iff}.
\end{remark}

\section{Modifying APN functions on affine subspaces of codimension two}\label{se:construction_method}\label{sec: 5}
In the previous section, we showed that it is possible to construct a new APN function from a given one by adding a linear function defined on a hyperplane. In this section, we show that new APN functions can be obtained by adding carefully selected constant functions on disjoint affine subspaces of codimension two. The following theorem provides a complete characterization of the APN property for such modifications.
\begin{theorem}\label{th:Subspaces}
	Let $F$ be an APN function from $\mathbb{F}_2^n$ to $\mathbb{F}_2^n$. Let $U$ be an $(n-2)$-dimensional subspace of $\mathbb{F}_2^n$ and let $U_1=U+u_1,\ U_2=U+u_2,\ U_3=U+u_3$ and $U_4=U+u_4$ be the four cosets of $U$ such that $\mathbb{F}_2^n=U_1 \cup U_2 \cup U_3 \cup U_4,$ where $u_1=0$ and $u_2,\ u_3,\ u_4 \in \mathbb{F}_2^n$. Define the function $G\colon\mathbb{F}_2^n\to \mathbb{F}_2^n$ as follows
	\begin{equation}\label{eq: definition of G on 4 cosests}
		G(x) = \left\lbrace
		\begin{array}{cccc}
			F(x)+a_1, & & \text{for }  x \in U_1,  \\[1ex]
			F(x)+a_2, & & \text{for }  x \in U_2,  \\[1ex]
			F(x)+a_3, & & \text{for }  x \in U_3,  \\[1ex]
			F(x)+a_4, & & \text{for }  x \in U_4,  
		\end{array}\right.
	\end{equation}
    where $a_1,a_2,a_3,a_4 \in \mathbb{F}_2^n$. Then, the function $G$ is APN if and only if the condition $$F(x_1)+F(x_2)+F(x_3)+F(x_4) \neq a_1+a_2+a_3+a_4$$ holds for all $2$-dimensional affine subspaces $\{x_1,x_2,x_3,x_4\}$ of\ $\mathbb{F}_2^n$  with $|\{x_1,x_2,x_3,x_4\}\cap U_i|=1$, for all $i=1,2,3,4$.  
\end{theorem}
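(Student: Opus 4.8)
The plan is to unwind the definition of the APN property for $G$ directly: $G$ is APN if and only if for every nonzero $A\in\mathbb{F}_2^n$ and every $B\in\mathbb{F}_2^n$, the equation $G(X+A)+G(X)=B$ has at most two solutions $X$. I would introduce the indicator (coset-labelling) function $\iota\colon\mathbb{F}_2^n\to\{1,2,3,4\}$, where $\iota(x)=i$ iff $x\in U_i$, and write $G(x)=F(x)+c_{\iota(x)}$ with $c_i=a_i$. Then for a fixed $A$ and a fixed pair $X$, $X+A$ the quantity $G(X+A)+G(X)=F(X+A)+F(X)+c_{\iota(X+A)}+c_{\iota(X)}$ differs from $F(X+A)+F(X)$ only by the constant $c_{\iota(X+A)}+c_{\iota(X)}$, which depends solely on the (unordered) pair of cosets hit by $X$ and $X+A$. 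The key observation is that because $U$ has codimension two, the coset of $x+A$ is determined by the coset of $x$ together with the coset of $A$ (coset addition in the quotient group $\mathbb{F}_2^n/U\cong\mathbb{F}_2^2$); hence I would split the analysis according to which coset $A$ lies in.

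First I would dispose of the case $A\in U$, i.e.\ $\iota(A)$ is the identity coset $U_1$. Then $\iota(X+A)=\iota(X)$ for every $X$, so $c_{\iota(X+A)}+c_{\iota(X)}=0$ and $G(X+A)+G(X)=F(X+A)+F(X)$; the equation $G(X+A)+G(X)=B$ has exactly the same solution set as $F(X+A)+F(X)=B$, which has at most two solutions since $F$ is APN. So no constraint arises here. Next, suppose $A$ lies in a nontrivial coset of $U$, say $A\in U_j$ with $j\in\{2,3,4\}$. For such an $A$, the map $X\mapsto X+A$ pairs up the four cosets into two pairs: it swaps $U_1\leftrightarrow U_j$ and swaps the remaining two cosets $U_k\leftrightarrow U_l$ (where $\{k,l\}=\{2,3,4\}\setminus\{j\}$), because in $\mathbb{F}_2^2$ adding a nonzero vector is a fixed-point-free involution with two orbits. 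Thus $c_{\iota(X+A)}+c_{\iota(X)}$ takes the single value $\delta_1:=c_1+c_j$ when $X\in U_1\cup U_j$, and the single value $\delta_2:=c_k+c_l$ when $X\in U_k\cup U_l$. I would therefore write the solution set of $G(X+A)+G(X)=B$ as the disjoint union of $S_1=\{X\in U_1\cup U_j: F(X+A)+F(X)=B+\delta_1\}$ and $S_2=\{X\in U_k\cup U_l: F(X+A)+F(X)=B+\delta_2\}$.

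Now I would count. Because $F$ is APN, $\{X:F(X+A)+F(X)=B+\delta_1\}$ has at most two elements, and the same for $B+\delta_2$; moreover, since $X$ and $X+A$ always lie in the same one of the two pairs of cosets $\{U_1,U_j\}$ or $\{U_k,U_l\}$ — each such solution set is either entirely inside $U_1\cup U_j$ or entirely inside $U_k\cup U_l$. So $|S_1|\le 2$ and $|S_2|\le 2$, and $G$ fails to be APN (for this $A$) precisely when $|S_1|=|S_2|=2$ for some $B$, i.e.\ when there exist a pair $\{x_1,x_2=x_1+A\}\subseteq U_1\cup U_j$ and a pair $\{x_3,x_4=x_3+A\}\subseteq U_k\cup U_l$ with $F(x_1)+F(x_2)=B+\delta_1$ and $F(x_3)+F(x_4)=B+\delta_2$, i.e.\ $F(x_1)+F(x_2)+F(x_3)+F(x_4)=\delta_1+\delta_2=c_1+c_j+c_k+c_l=a_1+a_2+a_3+a_4$. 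Finally I would verify that $\{x_1,x_2,x_3,x_4\}$ is exactly a $2$-dimensional affine subspace meeting each $U_i$ in one point: it is closed under the translations $+A$ (by construction $x_2=x_1+A$, $x_4=x_3+A$) and one checks $x_1+x_3\notin U$ so $x_1+x_2+x_3+x_4=0$ with all four distinct, hence it is a flat of dimension two; and since $x_1\in U_1$ or $U_j$, $x_2$ in the complementary one of those two, and $x_3,x_4$ split $\{U_k,U_l\}$, the flat hits all four cosets. Conversely, any such flat $\{x_1,x_2,x_3,x_4\}$ gives a nonzero $A$ (the difference of two of its points lying in complementary cosets among a suitable pairing) for which this configuration occurs. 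Running over all $j\in\{2,3,4\}$ and all admissible pairings recovers precisely the stated condition, so $G$ is APN iff $F(x_1)+F(x_2)+F(x_3)+F(x_4)\neq a_1+a_2+a_3+a_4$ for every $2$-dimensional affine subspace meeting each $U_i$ in exactly one point.

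The main obstacle I anticipate is bookkeeping: making sure that as $A$ ranges over the three nontrivial cosets and the pair $\{x_1,x_2\}$ ranges over the two cosets in its block while $\{x_3,x_4\}$ ranges over the other two, one enumerates each $2$-dimensional flat transversal to the coset partition exactly the right number of times (in particular, that every such flat arises and that the ``bad'' equality $\sum F(x_i)=\sum a_i$ is symmetric in the four points, so the labelling of which pair is which does not matter). This is a finite, combinatorial check on the quotient $\mathbb{F}_2^2$ and should go through cleanly once the coset-swap structure above is in place.
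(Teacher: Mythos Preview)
Your proposal is correct and follows essentially the same strategy as the paper: both arguments reduce to classifying how a $2$-dimensional affine flat can intersect the four cosets of $U$ (the only patterns being $4{+}0{+}0{+}0$, $2{+}2{+}0{+}0$, or $1{+}1{+}1{+}1$) and observing that the added constants cancel in the first two cases, so only the transversal case imposes a new condition. The paper phrases this via the flat characterization of APN (checking $G(x_1)+G(x_2)+G(x_3)+G(x_4)\neq 0$ on every $2$-flat directly) rather than the derivative formulation you use, which makes the bookkeeping you flag at the end evaporate, but the substance is identical.
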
	
\begin{proof}
	Let $U$ be an $(n-2)$-dimensional linear subspace of $\mathbb{F}_2^n$. We decompose $\mathbb{F}_2^n$ into four cosets of $U$ such that $\mathbb{F}_2^n=U_1 \cup U_2 \cup U_3 \cup U_4,$ where $U_1=U+u_1=U,\ U_2=U+u_2,\\ U_3=U+u_3$ and $U_4=U+u_4$, where $u_1=0$ and $u_2, u_3, u_4 \in \mathbb{F}_2^n$. Since $F$ is an APN function, we have that the condition $$F(x_1)+F(x_2)+F(x_3)+F(x_4) \neq 0$$ holds for all $2$-dimensional affine subspaces $\{x_1,x_2,x_3,x_4\}$ of $\mathbb{F}_2^n$. Now, we consider the distribution of affine $2$-dimensional subspaces $\{x_1,x_2,x_3,x_4\}$ in the subspaces $U_1,U_2,U_3,U_4$. 
	
	Recall that the set $\{x_1,x_2,x_3,x_4\}$ is an affine $2$-dimensional subspace of $\mathbb{F}_2^n$ if $x_1,x_2,x_3,x_4$ are pairwise different and $x_1+x_2+x_3+x_4=0$. Assume that $x_1,x_2 \in U_i$, $x_3 \in U_j$ and $x_4 \in U_k$, where $i,j,k$ are pairwise different. Then, $x_1+x_2 \in U_1$ and $x_3+x_4  \notin U_1$. Therefore, $x_1+x_2+x_3+x_4=0 \in U_1$ is not possible. The only possible distributions of $x_1,x_2,x_3,x_4$ between $U_i$ are either:
	\begin{itemize}
		\item[1)] $x_1,x_2,x_3,x_4 \in U_i$, for some $i$,
		\item[2)] $x_1,x_2 \in U_i$ and $x_3,x_4 \in U_j$, for some $i\neq j$,
		\item[3)] $x_1\in U_i,x_2\in U_j,x_3\in U_k,x_4\in U_l$, for $i\neq j\neq k \neq l$ (equivalently, the condition $| \{x_1,x_2,x_3,x_4\} \cap U_i| =1$ holds for all $i=1,2,3,4$).
	\end{itemize}
	In order to derive a necessary and sufficient condition for the APN-ness of the function $G$ defined in Eq.~\eqref{eq: definition of G on 4 cosests}, we need to verify that $G(x_1)+ G(x_2)+G(x_3)+G(x_4)\neq 0$ holds for all 2-dimensional flats $\{x_1,x_2,x_3,x_4\}$. To do so, we consider three possible distributions of the elements $x_i$ between the subspaces $U_j$.
	
	In the first case, we have that $x_1,x_2,x_3,x_4 \in U_i$, for some $i$. Then, 
	\begin{equation*}
		\begin{split}
			&G(x_1)+G(x_2)+G(x_3)+G(x_4)\\
			=&(F(x_1)+a_i)+(F(x_2)+a_i)+(F(x_3)+a_i)+(F(x_4)+a_i)\neq 0
		\end{split}
	\end{equation*}
due to the APN property of $F$.
    
    In the second case, we have that $x_1,x_2 \in U_i$ and $x_3,x_4 \in U_j$, for some $i\neq j$. Consequently,  
    \begin{equation*}
    	\begin{split}
    		&G(x_1)+G(x_2)+G(x_3)+G(x_4)\\
    		=&(F(x_1)+a_i)+(F(x_2)+a_i)+(F(x_3)+a_j)+(F(x_4)+a_j)\neq 0
    	\end{split}
    \end{equation*}
again due to the APN property of $F$.

In the third case, we have that the condition $|\{x_1,x_2,x_3,x_4\} \cap U_i| =1$ holds for all $i=1,2,3,4$. W.l.o.g, we assume that $x_i\in U_i$. Then, we have
	\begin{equation*}
	\begin{split}
		&G(x_1)+G(x_2)+G(x_3)+G(x_4)\\
		=&(F(x_1)+a_1)+(F(x_2)+a_2)+(F(x_3)+a_3)+(F(x_4)+a_4)\\
		=& F(x_1)+F(x_2)+F(x_3)+F(x_4)+(a_1+a_2+a_3+a_4).
	\end{split}
\end{equation*}
In this way, the function $G$ is APN if and only if the condition $$F(x_1)+F(x_2)+F(x_3)+F(x_4) \neq a_1+a_2+a_3+a_4$$ holds for all 2-dimensional affine subspaces $\{x_1,x_2,x_3,x_4\}$ of $\mathbb{F}_2^n$ with the property $|\{x_1,x_2,x_3,x_4\}\cap U_i|=1$ for all $i=1,2,3,4$.
\end{proof}	

In the following example, we indicate that using Theorem~\ref{th:Subspaces} one construct an APN function, that is EA-inequivalent to a given one.

\begin{example}\label{ex:subspace_n_8_1}
	Let $F$ be an APN function from $\mathbb{F}_{2^8}$ to $\mathbb{F}_{2^8}$ defined as $F(x)=x^3$.  Let the multiplicative group of $\F_{2^8}$ be defined by $(\F_{2^8})^*=\langle a \rangle$, where $\alpha^8 + \alpha^4 + \alpha^3 + \alpha^2 + 1=0$.  First, we decompose $\mathbb{F}_{2^8}$ into four cosets $U_1, U_2, U_3, U_4$ defined by 
   \begin{equation*}
     \begin{split}
         U_1=&\{x \in \mathbb{F}_{2^8}\colon   \Tr^8_2(x)=0\},\\
         U_2=&\{x \in \mathbb{F}_{2^8}\colon \Tr^8_2(x)=1\},\\
         U_3=&\{x \in \mathbb{F}_{2^8}\colon \Tr^8_2(x)=\beta \},\\
         U_4=&\{x \in \mathbb{F}_{2^8}\colon \Tr^8_2(x)=\beta^2\},\\
     \end{split}
    \end{equation*}
    where $\beta$ is a root of $x^2+x+1$; in the defined above field $\F_{2^8}$, we have that $\beta\in \{\alpha^{85}, \alpha^{170}\}$. Compute the set 
    \begin{equation*}
    	\begin{split}
    		A&=\F_{2^8}\setminus \left\{\sum_{i=1}^4 F(x_i)\colon \sum_{i=1}^4 x_i=0 \mbox{ and  } x_i \in U_i,\mbox{ for } i=1,2,3,4 \right\} \\
    		&=\{0, 1, \alpha^{85}, \alpha^{170}\}.
    	\end{split}
    \end{equation*}
    We can choose $a_1,a_2,a_3,a_4 \in \mathbb{F}_{2^8}$ such that $a_1+a_2+a_3+a_4\in A$. Setting  $a_1=a_2=0,a_3=\alpha^{85}\beta,a_4=\alpha^{85}\beta^2$, we have that $a_1+a_2+a_3+a_4=\alpha^{85}\in A$. Define the function $G\colon\F_{2^8}\to\F_{2^8}$ as in Eq.~\eqref{eq: definition of G on 4 cosests}, that is
    $$G(x) = \left\lbrace
	\begin{array}{cccc}
		F(x)+a_1, & & \text{for }  x \in U_1,  \\[1ex]
		F(x)+a_2, & & \text{for }  x \in U_2,  \\[1ex]
		F(x)+a_3, & & \text{for }  x \in U_3,  \\[1ex]
		F(x)+a_4, & & \text{for }  x \in U_4,  
	\end{array}\right.
=\left\lbrace
\begin{array}{cccc}
	F(x), & & \text{if }   \Tr^8_2(x)=0,  \\[1ex]
	F(x), & & \text{if }   \Tr^8_2(x)=1,  \\[1ex]
	F(x)+\alpha^{85}\beta, & & \text{if }   \Tr^8_2(x)=\beta,  \\[1ex]
	F(x)+\alpha^{85}\beta^2, & & \text{if }   \Tr^8_2(x)=\beta^2.  
\end{array}\right.
	$$
	Then, the function $G$ is APN by Theorem~\ref{th:Subspaces}. For the constructed function $G$, we give a ``simplified'' finite filed representation. To do so, we recall that the polynomial representation of the trace function from $\F_{2^n}$ to its subfield $\F_{2^m}$ is given by $\Tr^n_m(x) =\sum_{i=0}^{\frac{n}{m}-1} x^{2^{i \cdot m}}$. First, we observe that  $\Tr^8_1(x)=\Tr^2_1(\Tr^8_2(x))=\Tr^8_2(x)+(\Tr^8_2(x))^2$. In this way, we have that
	if $\Tr^8_2(x)\in\{0,1\}$ then $\Tr^8_1(x)=0$. On the other hand, if $\Tr^8_2(x)\in\{\beta,\beta^2\}$ then $\Tr^8_1(x)=1$. In this way, the APN function $G\colon\mathbb{F}_{2^8}\to\mathbb{F}_{2^8}$ can be written as $$G(x)=x^3+\alpha^{85}\Tr^8_1(x)\Tr^8_2(x).$$ Finally, we notice that $G$ is EA-inequivalent to $F(x)=x^3$ on $\F_{2^8}$, since $\Gamma\mbox{-rank}(G)=13\,842\neq\Gamma\mbox{-rank}(F)=11\,818$.
\end{example}

\section{Conclusion and open problems}\label{sec: conclusion}
In this paper, we consider further secondary constructions of APN functions. The following questions related to the computational results obtained in Section~\ref{sec: 4} deserve further theoretical investigations:
\begin{enumerate}
	\item  As we show in Remark~\ref{rem: apns in dim 6}, every quadratic APN function in dimension 6 can be written (up to EA-equivalence) as a univariate polynomial $G_i(x)=x^3+\Tr(x)L_i(x)$, for $x\in\F_{2^6}$, where  $L_i\colon\F_{2^6}\to\F_{2^6}$ is a suitable linear mapping. Alternatively, as shown in~\cite{BCCCV_Isotopic_2020}, each quadratic APN function in dimension 6 can also be characterized (up to EA-equivalence) through the isotopic shift construction. We believe it would be valuable to explore these two approaches further and gain a deeper understanding of their connections.
	\item As we indicate in Remark~\ref{rem: apns in dim 6}, it is possible to construct APN functions with the non-classical spectrum from those with the classical one by modifying them on hyperplanes using suitable linear functions. Find more such examples in higher dimensions and analyze whether infinite families of APN functions with the non-classical Walsh spectrum of the form $\F_{2^n}\ni x \mapsto x^3+\Tr(x)L(x)$ exist.
	\item More general, find linearized polynomials satisfying the APN condition given in Eq.~\eqref{eq: iff condition}, which inevitably lead to quadratic APN functions of the form $\F_{2^n} \ni x 
 \mapsto x^3+\Tr(x)L(x)$. Is it possible to obtain APN functions of the form $\F_{2^n} \ni x  \mapsto x^3+\Tr(x)Q(x)$, where $Q$ is quadratic, or more general, nonlinear? 
\end{enumerate}
\section*{Acknowledgments} 
\noindent The authors express their gratitude to Faruk G\"olo\u{g}lu for providing the initial version of the proof for Theorem~\ref{th: Kloosterman sums iff} and for granting permission to develop these ideas further and publish it. In fact, Section~\ref{subsec: 4.2} is based on several discussions with him concerning the APN-ness of the functions of the form $F(x)+\Tr(x)L(x)$. Parts of this work were presented in ``The 8th International Workshop on Boolean Functions and Their Applications, BFA 2023'' by the first author~\cite{Taniguchi_BFA23}, and in the PhD thesis~\cite{Arshad_PhD_Thesis} of the last author. \ \\

\noindent Hiroaki Taniguchi is partly supported by JSPS KAKENHI Grant No. JP21K03343. Part of the research of the third author has been funded by the Deutsche Forschungsgemeinschaft (DFG, German Research Foundation)~--- Project No. 541511634.





\end{document}